\numberwithin{equation}{section}
\newtheorem{theorem}{Theorem}[section]
\newtheorem{proposition}[theorem]{Proposition}
\newtheorem{lemma}[theorem]{Lemma}
\newtheorem{corollary}[theorem]{Corollary}
\theoremstyle{definition}
\newtheorem{definition}[theorem]{Definition}
\newtheorem{example}[theorem]{Example}
\journal{}
\begin{document}
\begin{frontmatter}
\title{Some results about the Equiangular Algorithm}
\author[sad]{D. Sadeghi\corref{cor1}}
\author[sad]{Azim Rivaz}
\ead{arivaz@uk.ac.ir}
\address[sad]{Department of Mathematics, Shahid Bahonar University of Kerman, Kerman 76169-14111, IRAN}
\cortext[cor1]{Corresponding author. E-mail: $\mathtt{dl.sadeghi@math.uk.ac.ir}$.}
\begin{abstract}
\textit{Equiangular Algorithm} generates a set of equiangular normalized vectors with given angle $\theta$ using a set of linearly independence vectors in a real inner product space, which span the same subspaces. The outcome of EA on column vectors of a matrix $A$ provides a matrix decomposition $A=SR$, where $S$ is called \textit{Equiangular Matrix} which has equiangular column vectors.
\\In this paper we discuss some properties of equiangular matrices. The inverse and eigenvalue problems of these matrices are studied. Also we derive some canonical forms of some matrices based on equiangular ones.
\end{abstract}
\begin{keyword}
Equiangular matrix, Equiangular vectors, Gram matrix, Eigenvalue problem, Equiangular frame \newline
\MSC[2010] 15B99, 15A21, 15A09, 15A29
\end{keyword}
\end{frontmatter}
\section{Introduction} \label{intro}
\textit{Equiangular Algorithm} (EA) \cite{Rivaz} is a process like Gram-Schmidt algorithm \cite{Horn,Solivérez}, that takes a set of linearly independent vectors $\{a_1,\cdots,a_m\}\subset \mathbb{R}^n~(m\leq n)$, then produces a set of normalized equiangular vectors $\{s_1,\cdots,s_m\}$ with the angle $\theta\in(0,\arccos(\frac{-1}{m-1}))$. So $s_i^T s_j=\cos\theta= \alpha$ if $i\neq j$, and $\Vert s_i\Vert=1$ for $i=1,\ldots,m$. We then showed in \cite{Rivaz} that any $n\times m$ full rank matrix $A$ is factorized as $A=SR$, where $S$ is an equiangular matrix $n\times m$ and $R$ is an $m\times m$ upper triangular matrix (SR decomposition). We denote the set of all full-rank equiangular matrices of size $n\times m$ with $\cos\theta=\alpha$ by $\text{EM}^{n\times m}_\alpha$ and the nonsingular equiangular square matrices by $\text{EM}^n_\alpha$. Also $G_\alpha=S^T S$ is a positive definite matrix. In EA the $(k+1)$th equiangular vector $s_{k+1}$ is obtained as follows
\begin{equation}
\quad s_{k+1}=\frac{v_{k+1}}{\Vert v_{k+1}\Vert}~~,~~v_{k+1}=q_{k+1}+\sqrt{ \tfrac{k}{(\sec\theta-1)(\sec\theta+k)}}\frac{\sum _{i=1}^{k}{s_i}}{\Vert\sum _{i=1}^{k}{s_i}\Vert}, \label{s_k+1}
\end{equation}
where the vector $q_{k+1}$ is a normalized orthogonal vector to $s_i$'s which of $span\langle a_1,\ldots,a_{k+1}\rangle$  $(i=1,\ldots,k)$. Since the size of the right hand vector in \eqref{s_k+1} is $\sqrt{ \tfrac{k}{(\sec\theta-1)(\sec\theta+k)}}$, then if $\theta\rightarrow\pi/2$, so this coefficient tends to zero and $v_{k+1}$ converges to the $q_{k+1}$.
\\This paper is organized as follows. In section \ref{inveig}, we show that the inverse of equiangular matrices can be computed with order of $O(n^2)$. Also we provide a bound for the eigenvalues of an equiangular matrix. In section \ref{schur}, we introduce some matrix factorizations based on Schur decomposition, then provide its application in related to the roots of a polynomial. In section \ref{doubequi} we study the special set of normal matrices named ``doubly equiangular matrices" which are equiangular as column-wise and row-wise. Finally, in section \ref{equiframe}, we introduce the set of equiangular tight frame of size $n+1$ in $\mathbb{R}^n$. Then the existence of equiangular vectors with angles greater than $\pi/2$ will be discussed using equiangular tight frames. \\
Throughout this paper, $\Vert X\Vert$ denotes the 2-norm of matrix $X$, $\Vert x\Vert$ denotes the Euclidean norm of vector $x$. Also $\alpha=\cos\theta$, $e=[1,\ldots ,1]^T\in\mathbb{R}^n$ and $\{e_1,e_2,\ldots ,e_n\}$ is the standard orthogonal basis of $\mathbb{R}^n$. Moreover, All matrices in this paper are real. For simplicity, we denote the transpose of the inverse of a non-singular matrix $A$ as $A^{-T}$. Indeed any set of equiangular vectors in $\mathbb{R}^n$ can be considered as a set of equiangular lines (ELs), but not inversely, in general. The discussion of ELs is of interest for about sixty years of investigation. In 1973, Lemmens and Seidel \cite{Lemmens} made a comprehensive study of real equiangular line sets which is still today a fundamental piece of work. In this paper we turn our attention on equiangular vectors.
\section{Inverse and eigenvalue problems} \label{inveig}
In this section we discuss the inverse and eigenvalue problems of equiangular matrices. We need to introduce the so-called $\mathit{Gram~matrix}$ \cite{Godsil,Horn}. 
\begin{definition}
If $A=[a_1,\ldots,a_m]$, then the matrix $G=A^T A$, is said the $\mathit{Gram~matrix}$ of $A$.\qquad\qquad\qquad\quad $\lozenge$ \label{defG}
\end{definition}
Suppose that $S=[s_1,\ldots,s_n]\in \text{EM}^n_\alpha$ then we define $G_\alpha$ as
\begin{equation}
S^{T}S={\footnotesize \left[\begin{array}{cccc}1&\alpha &\cdots &\alpha\\\alpha &1&\cdots &\alpha\\\vdots &\vdots &\ddots &\vdots\\\alpha &\alpha &\cdots &1\end{array}\right]} \label{I},
\end{equation}
which is the Gram matrix of $S$. Since for $x\neq 0$ we have $x^TG_\alpha x=\Vert Sx\Vert^2> 0$, then $G_\alpha$ is positive definite. $G_\alpha$ can be rewritten as $I+\alpha\mathcal{S}$, where $\mathcal{S}$ has zeros on its main diagonal and ones on all off-diagonals and is a special case of the $\mathit{Seidel~matrix}$. 
\begin{proposition}
If $S\in EM_\alpha^n$, then $S^{-1}=\beta G_{\alpha^\prime}S^T$ and can be computed with $O(n^2)$ arithmetic operations where \label{invS}
\begin{equation}
\beta=\frac{1+(n-2)\alpha}{(1-\alpha)\big(1+(n-1)\alpha\big)}\qquad,\qquad {\alpha^{\prime}}=\frac{-\alpha}{1+(n-2)\alpha}~. \label{alphbet}
\end{equation}  
\end{proposition}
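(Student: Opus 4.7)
The plan is to reduce the claim to inverting the Gram matrix $G_\alpha$. Since $S^T S = G_\alpha$ is nonsingular (being positive definite, as noted just before the proposition), multiplying the identity $S^T S = G_\alpha$ on the left by $S^{-T}G_\alpha^{-1}$ gives $S^{-1} = G_\alpha^{-1} S^T$. So the whole proposition reduces to showing that $G_\alpha^{-1} = \beta G_{\alpha'}$ with the specified constants.

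The key structural observation is that $G_\alpha$ has the rank-one-plus-scalar form
\begin{equation*}
G_\alpha = (1-\alpha)I + \alpha\, ee^T,
\end{equation*}
and similarly $G_{\alpha'} = (1-\alpha')I + \alpha'\,ee^T$. I would compute the product $G_\alpha\bigl(\beta G_{\alpha'}\bigr)$ directly, using $ee^T\cdot ee^T = n\,ee^T$, which yields an expression of the form $A\cdot I + B\cdot ee^T$ with
\begin{equation*}
A = \beta(1-\alpha)(1-\alpha'), \qquad B = \beta\bigl[\alpha(1-\alpha') + (1-\alpha)\alpha' + n\alpha\alpha'\bigr].
\end{equation*}
The matrix equals $I$ precisely when $A=1$ and $B=0$. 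Substituting the stated $\alpha' = -\alpha/(1+(n-2)\alpha)$ gives $1-\alpha' = (1+(n-1)\alpha)/(1+(n-2)\alpha)$, whence $A=1$ forces exactly the stated value of $\beta$. Plugging $\beta$ and $\alpha'$ back into $B$ and simplifying the bracket then collapses it to zero; this is the only piece of real algebra, but it is a routine common-denominator calculation, not an obstacle.

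Alternatively, since one only needs existence of constants $c_1, c_2$ with $G_\alpha^{-1} = c_1 I + c_2 ee^T$, one can diagonalize $G_\alpha$ on the $e$-direction and its orthogonal complement: $G_\alpha e = (1+(n-1)\alpha)e$ and $G_\alpha v = (1-\alpha)v$ for every $v\perp e$. Inverting the two eigenvalues and writing the inverse in the $\{I, ee^T\}$ basis yields $G_\alpha^{-1}$ in closed form, and comparing with $\beta\bigl((1-\alpha')I+\alpha' ee^T\bigr)$ recovers the formulas in \eqref{alphbet}. This route makes the positive-definiteness hypothesis (equivalently $\alpha\in(-1/(n-1),1)$) enter cleanly, since both eigenvalues must be nonzero for $\beta$ and $\alpha'$ to be defined.

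For the complexity claim, I would simply note that the product $\beta G_{\alpha'} S^T$ can be evaluated without ever forming an $n\times n$ dense matrix: using $G_{\alpha'} = (1-\alpha')I + \alpha'\,ee^T$, one has
\begin{equation*}
\beta G_{\alpha'} S^T = \beta(1-\alpha')\, S^T + \beta\alpha'\, e\,(e^T S^T),
\end{equation*}
where $e^T S^T$ is a single row vector computed in $O(n^2)$ time (summing the rows of $S^T$), the outer product $e(e^T S^T)$ fills an $n\times n$ matrix in $O(n^2)$, and the scalar-plus-scalar combination with $S^T$ also costs $O(n^2)$. The main obstacle, if any, is the algebraic verification in the first step, but it is elementary given the rank-one structure.
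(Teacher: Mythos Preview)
Your proposal is correct and follows essentially the same approach as the paper: both reduce to the identity $G_\alpha^{-1}=\beta G_{\alpha'}$ (the paper simply asserts this, while you verify it), and your $O(n^2)$ argument via $\beta(1-\alpha')S^T+\beta\alpha' e(e^T S^T)$ is the matrix-level version of the paper's entrywise formula $[S^{-1}]_{ij}=\beta\bigl(s_{ji}+\alpha'\sum_{k\neq i}s_{jk}\bigr)$.
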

\begin{proof}
Since $S^{-1}S^{-T}=G_\alpha^{-1}=\beta G_{\alpha^\prime}$, then the result is obvious. If $[S]_{ij}=s_{ij}$ then $[S^{-1}]_{ij}=\beta(s_{ji}+\alpha^{\prime} \cdot\sum_{k\not =i}s_{jk})$. Therefore $S^{-1}$ can be computed with $O(n^2)$ arithmetic operations.
\end{proof}
If $\alpha$ be zero, then $\beta=1$ and $G_{\alpha^{\prime}}=G_0=I$ so $S^{-1}=S^T$ which shows that the result is true in case of orthogonality.
\begin{corollary}
Let $S=[s_1,\ldots,s_n]\in \text{EM}^n_\alpha$ by $0<\alpha<1$, then the rows of $S^{-1}$, $(s_{i}^{\prime})$'s are equiangular with $\arccos~\alpha^{\prime}=\theta ^{\prime}>\pi/2$, where $\Vert s_{i}^{\prime}\Vert=\sqrt{\beta}$. Also the cosine of the angle between $s_i$ and $s_{i}^{\prime}$ is $1/\sqrt{\beta}$, where $\alpha^{\prime},\beta$ are defined in \eqref{alphbet}.          \label{Tinv}
\end{corollary}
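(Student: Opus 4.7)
The plan is to extract everything from a single identity, namely
\[
S^{-1}(S^{-1})^{T}=(S^{T}S)^{-1}=G_{\alpha}^{-1}=\beta\, G_{\alpha^{\prime}},
\]
where the last equality is Proposition \ref{invS}. First I would recognize that, viewing each $s_i^{\prime}$ as a column vector (the transpose of the $i$-th row of $S^{-1}$), the Gram matrix of the system $\{s_1^{\prime},\ldots,s_n^{\prime}\}$ is precisely $S^{-1}(S^{-1})^{T}$. Reading off its diagonal gives $\|s_i^{\prime}\|^{2}=\beta$, hence $\|s_i^{\prime}\|=\sqrt{\beta}$, and reading off any off-diagonal entry gives $(s_i^{\prime})^{T}s_j^{\prime}=\beta\alpha^{\prime}$ for $i\neq j$. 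Dividing by the norms then yields
\[
\cos\angle(s_i^{\prime},s_j^{\prime})=\frac{\beta\alpha^{\prime}}{\sqrt{\beta}\sqrt{\beta}}=\alpha^{\prime},
\]
so the rows of $S^{-1}$ are equiangular with common cosine $\alpha^{\prime}$, as claimed.

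Next I would check the sign of $\alpha^{\prime}$ in order to certify that $\theta^{\prime}>\pi/2$. Using the explicit formula \eqref{alphbet}, the hypothesis $0<\alpha<1$ together with $n\geq 2$ makes the denominator $1+(n-2)\alpha$ strictly positive while the numerator is $-\alpha<0$, so $\alpha^{\prime}<0$ and therefore $\theta^{\prime}=\arccos\alpha^{\prime}>\pi/2$.

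Finally, for the angle between $s_i$ and $s_i^{\prime}$ I would exploit the defining identity $S^{-1}S=I$: the $(i,i)$ entry of this product reads exactly $(s_i^{\prime})^{T}s_i=1$. Since $\|s_i\|=1$ and $\|s_i^{\prime}\|=\sqrt{\beta}$ from the previous step, normalizing gives
\[
\cos\angle(s_i,s_i^{\prime})=\frac{(s_i^{\prime})^{T}s_i}{\|s_i^{\prime}\|\,\|s_i\|}=\frac{1}{\sqrt{\beta}},
\]
which is the last assertion. There is no real obstacle here; the only point requiring a moment of care is keeping track of whether the $s_i^{\prime}$ are treated as rows or as (transposed) columns so that the Gram matrix identity and the orthogonality relation $S^{-1}S=I$ are applied consistently.
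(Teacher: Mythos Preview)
Your proof is correct and noticeably cleaner than the paper's. You rely solely on the identity $S^{-1}S^{-T}=G_\alpha^{-1}=\beta G_{\alpha'}$ (which already appears in the proof of Proposition~\ref{invS}) and on $S^{-1}S=I$, reading off the norms, the mutual inner products, and the pairing $(s_i')^{T}s_i=1$ directly from these matrix equalities. The paper instead establishes $\cos\tau_i=1/\Vert s_i'\Vert$ first, then determines $\Vert s_i'\Vert$ by a geometric argument: it places $s_i$, $s_i'^{T}$, and $z_i=\sum_{j\neq i}s_j$ in a two-dimensional subspace $\mathbf{P}_{n,i}$, observes $s_i'^{T}\perp z_i$, and applies Pythagoras to obtain $\Vert s_i'\Vert=\sqrt{\beta}$; only afterwards does it compute $s_i' s_j'^{T}$ by an explicit expansion using $S^{-1}=\beta G_{\alpha'}S^{T}$. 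Your approach bypasses the plane $\mathbf{P}_{n,i}$ and the coordinate computation entirely, at the cost of offering no geometric insight into why $s_i$, $s_i'^{T}$, and $z_i$ are coplanar; the paper's route, while longer, makes that structure explicit.
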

\begin{proof}
If $\tau_i$ be the angle between vectors $s_i, s_{i}^{\prime T}$, then $1=s_{i}^{\prime}\cdot s_i=\Vert s_{i}^{\prime}\Vert\cos\tau_i$, so $\cos\tau_{i}=1/\Vert s_{i}^{\prime}\Vert$ and $\tau_i\in(0,\pi/2)$. Two vectors $s_i$ and $s_{i}^{\prime T}$ make the angles $\theta$ and $\pi/2$ with all vectors of the set $\tilde{S}_{i}=\{s_j\}_{j\neq i}$, respectively. Therefore we introduce the subspace $\mathbf{P}_{n,i}$ with respect to $i$ similar to $\mathbf{P}_k$ in \cite{Rivaz} (eq. (2.6)) in which all vectors has the same angle with the vectors of $\tilde{S}_i$. So $s_i,s_{i}^{\prime T}\in \mathbf{P}_{n,i}$. On the other hand $z_i=\sum_{j\neq i}s_{j}\in\mathbf{P} _{n,i}$. As shown in \cite{Rivaz}, $\mathbf{P}_{n,i}$ is a plane, hence $z_i \in\text{span}\langle s_i,s_{i}^{\prime T}\rangle$. Clearly $s_{i}^{~\prime T}\perp z_i$. If $\varphi_i$ be the angle between $s_i,z_i$, then from Pythagoras Theorem, $1=\cos^{2}\varphi_i+\cos^{2}\tau_i =(n-1)\alpha^2/\big(1+ (n-2)\alpha\big)+1/\Vert s_{i}^{\prime}\Vert^2$, thus
\begin{equation} 
\Vert s_{i}^{~\prime}\Vert =\sqrt{\frac{1+(n-2)\alpha}{(1-\alpha)\left(1+(n-1)\alpha\right)}}= \sqrt\beta~,\quad i=1,\ldots ,n\label{ninv}
\end{equation}
Taking inner product of rows in two sides of $S^{-1}=\beta G_{\alpha^{\prime}}S^T$ gives
\begin{align} 
s_{i}^{\prime}s_{j}^{\prime T} 
&=\beta^{2}[\alpha^{\prime}\cdots \substack{i\text{th}\\1\\~}\cdots \alpha^{\prime}]~
G_\alpha~ 
[\alpha^{\prime}\cdots \substack{j\text{th}\\1\\~}\cdots \alpha^{\prime}]^T\nonumber\\& =\beta^{2}\big[\left(\alpha +\alpha^{\prime}+(n-2)\alpha \alpha^{\prime}\right)[1\cdots \substack{i\text{th}\\0\\~}\cdots 1]+\left(1+(n-1)\alpha\alpha ^{\prime}\right)[0\cdots\substack{i\text{th} \\1\\~}\cdots0]\big][\alpha^{\prime}\cdots\substack{j\text{th} \\1\\~}\cdots\alpha ^{\prime}]^T\nonumber\\&=\beta^{2} \big[\left(\alpha+\alpha ^{\prime}+(n-2)\alpha\alpha^{\prime}\right)((n-2)\alpha^{\prime}+1)+ \left(1+(n-1)\alpha\alpha^{\prime} \right)\alpha^{\prime}\big]=\frac{-\alpha}{(1-\alpha)\left(1+(n-1)\alpha\right)}~.\label{sipsjp}
\end{align}
Then $\cos\theta^\prime=\dfrac{s_{i}^{\prime} s_{j}^{\prime T}}{\Vert s_{i}^{\prime}\Vert\Vert s_{j}^{\prime T}\Vert}= \dfrac{-\alpha}{1+(n-2)\alpha}=\alpha^{\prime}$. So $\alpha^{\prime}<0$ and $\theta^\prime>\pi/2$.
\end{proof}
From \eqref{ninv} matrix $\beta^{-1/2}S^{-1}$ is row-wise equiangular, so $\beta^{-1/2}S^{-T}\in \text{EM}^n_{\alpha^{\prime}}$. for $n=2$, $\alpha^{\prime}=-\alpha$ and $\theta^{'}=\pi -\theta$ (Figure \ref{fig4}).
\begin{figure}
\centering
\includegraphics[scale=.22]{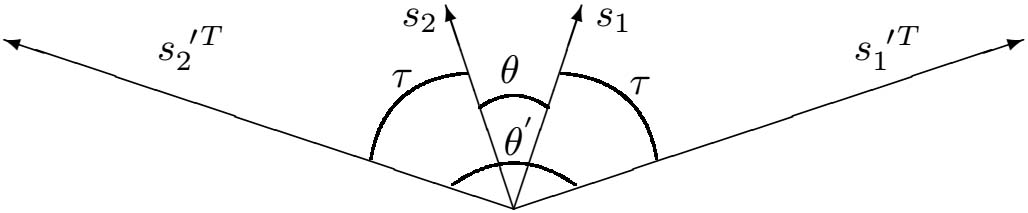}
\caption{\scriptsize{Inverse of the matrix $[s_1,s_2]$ is $[s_{1}^{~\prime},s_{2}^{~\prime}]^{T}, \text{where}~\theta +\tau =\frac{\pi}{2}$ and $\theta+\theta^{'} =\pi$.}}
\label{fig4}
\end{figure}
\begin{corollary}
If $\alpha^{\prime},\beta$ are defined as \eqref{alphbet}, then $G_\alpha G_{\alpha^{\prime}}=(1/\beta)I_n$~. \label{invG}
\end{corollary}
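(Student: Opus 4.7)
The plan is to verify the identity by direct matrix multiplication, exploiting the highly structured form of $G_\alpha$ and $G_{\alpha'}$. Both matrices can be written in the uniform shape
\[
G_\alpha=(1-\alpha)I_n+\alpha J_n,\qquad G_{\alpha'}=(1-\alpha')I_n+\alpha' J_n,
\]
where $J_n=ee^T$ is the all-ones matrix, which satisfies $J_n^2=nJ_n$. Multiplying out and collecting terms gives
\[
G_\alpha G_{\alpha'}=(1-\alpha)(1-\alpha')\,I_n+\bigl[\alpha+\alpha'+(n-2)\alpha\alpha'\bigr]\,J_n.
\]

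The next step is to show the $J_n$-coefficient vanishes. Factoring $\alpha+\alpha'+(n-2)\alpha\alpha'=\alpha+\alpha'\bigl(1+(n-2)\alpha\bigr)$ and substituting $\alpha'=-\alpha/(1+(n-2)\alpha)$ from \eqref{alphbet} makes this collapse to $\alpha-\alpha=0$. Thus $G_\alpha G_{\alpha'}$ is already a scalar multiple of $I_n$.

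It remains to identify the scalar. A short computation shows $1-\alpha'=(1+(n-1)\alpha)/(1+(n-2)\alpha)$, hence
\[
(1-\alpha)(1-\alpha')=\frac{(1-\alpha)\bigl(1+(n-1)\alpha\bigr)}{1+(n-2)\alpha}=\frac{1}{\beta},
\]
matching the definition of $\beta$ in \eqref{alphbet}. Assembling these pieces yields $G_\alpha G_{\alpha'}=(1/\beta)I_n$, as required. Alternatively, this corollary is essentially a restatement of the identity $G_\alpha^{-1}=\beta G_{\alpha'}$ that is already invoked at the start of the proof of Proposition \ref{invS}, so one could also deduce it by appealing to that relation. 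The only obstacle is the algebraic bookkeeping in steps 2--3, which is elementary.
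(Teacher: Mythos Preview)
Your proof is correct. The paper's own proof is the one-line ``It is obvious from Proposition~\ref{invS},'' i.e.\ it simply reads off the identity $G_\alpha^{-1}=\beta G_{\alpha'}$ that was asserted (without verification) at the start of that proposition's proof. Your main argument instead gives a self-contained direct computation via the decomposition $G_\alpha=(1-\alpha)I_n+\alpha J_n$ and the relation $J_n^2=nJ_n$; this has the advantage of actually establishing the inverse formula rather than quoting it, so in effect you are supplying the missing verification behind Proposition~\ref{invS} as well. You also note the paper's shortcut as an alternative at the end, so both routes are covered.
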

\begin{proof}
It is obvious from Proposition \ref{invS}.
\end{proof}
We give some examples of computing the inverse of equiangular matrices using \ref{invS}.
\begin{example}
For the $4\times 4$ Hilbert matrix \textit{H} EA for $\theta=\pi/3$ gives $H=SR$ where\\ $S=\left[\footnotesize{\begin{array}{cccc}0.8381&-0.0336&~~0.3939 &~~0.2788\\0.4191& ~~0.5921&-0.2572&~~0.4381\\0.2794&~~0.5977& ~~0.4062&-0.3031\\0.2095&~~0.5396&~~0.7834& ~~0.7991\end{array}}\right]$ and $R=\left[\footnotesize{\begin{array}{cccc}1.1932&0.6021&0.3998&0.2980\\0~\qquad& 0.1369&0.1426&0.1318\\0~ \qquad&0~\qquad&0.0076& 0.0117\\0~\qquad&0~\qquad&0~\qquad&0.0002 \end{array}}\right]$. $\alpha={\alpha^{\prime}}=1/2$ and $\beta=3/2$, then $S^{-1}=1.5~G_{0.5}\left[\footnotesize {\begin{array}{cccc}~~0.8381&~~0.4191&~~0.2794 &0.2095\\-0.0336&~~0.5921&~~0.5977&0.5396\\ ~~0.3939&-0.2572&~~0.4062&0.7834 \\~~0.2788&~~0.4381&-0.3031&0.7991 \end{array}}\right]$.\qquad\qquad\qquad\qquad\qquad \qquad\qquad\quad $\lozenge$
\end{example}
\begin{example}
For the Identity matrix $I_4$, EA for $\theta=\pi/4$ gives\\ $S=\left[\footnotesize{\begin{array}{cccc}1&0.7071&0.7071&0.7071\\0&0.7071 &0.2929&0.2929\\0&0~\qquad&0.6436& 0.1885\\0&0~\qquad&0~\qquad&0.6154 \end{array}}\right]$ and $R=S^{-1}=\left[\footnotesize{\begin{array}{cccc} 1&-1\quad\qquad&-0.6436~~&-0.4760\\ 0&1.4142&-0.6436~~&-0.4760\\0&0~\qquad &1.5538&-0.4760\\0&0~\qquad&0~\qquad &~~1.6250 \end{array}}\right]$. By rounding we have $S\in \text{EM}^4_{0.7071}$ and $(0.6154)R^T\in \text{EM}^4_{-0.2929}$. Note that there is only one upper triangular equiangular matrix with positive entries with respect to a scaler $\alpha$.\qquad\qquad\qquad\qquad\qquad \qquad\qquad\qquad\qquad \qquad\qquad\qquad\qquad\qquad $\lozenge$
\end{example}
\begin{example}
For the orthogonal matrix $Q=\left[\footnotesize{\begin{array}{ccc}~~3/7&-2/7&~~6/7\\~~6/7&~~3/7 &-2/7\\-2/7&~~6/7&~~3/7\end{array}} \right]$ EA for $\theta=\pi/3$ gives $Q=SR$ where $S=\left[\footnotesize{\begin{array}{ccc}~~0.4286&-0.0332&0.8317\\~~0.8571 &~~0.7997&0.3190\\-0.2857&~~0.5995& 0.4545\end{array}}\right]$ and $R=\left[\footnotesize{\begin{array}{ccc}1& -0.5774&-0.4082\\0&~~1.1547&-0.4082\\ 0&0~~~~~&~~1.2247\end{array}}\right]$. As regards to \eqref{alphbet}, $\beta^{-1/2}R$ is row-equiangular: $RR^T =S^{-1}S^{-T}=G_ \alpha ^{-1}=\beta G_{\alpha^{\prime}}$, with $\alpha=1/2$. \label{Dorth} \qquad\qquad\qquad\qquad\qquad \qquad\qquad\qquad\qquad\qquad\quad $\lozenge$
\end{example}
Now we discuss the eigenvalues of Equiangular matrices. Actually we present lower and upper bounds for the eigenvalues of an Equiangular matrix $S$ relative to the eigenvalues of its corresponding matrix $G_\alpha$~.
\begin{lemma}
Suppose that $G_\alpha$ is the Gram matrix of a given matrix $S\in \text{EM}^n_\alpha$. The eigenvalues of $G_\alpha$ are $1-\alpha$ and $1+(n-1)\alpha$. \label{Leig}
\end{lemma}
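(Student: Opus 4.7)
The plan is to exploit the simple additive structure of $G_\alpha$. Observe that
\[
G_\alpha = (1-\alpha)I_n + \alpha\, ee^T,
\]
where $e = [1,\ldots,1]^T$, since the diagonal entries are $(1-\alpha) + \alpha = 1$ and the off-diagonal entries are $\alpha$. Thus it suffices to diagonalize the rank-one matrix $ee^T$ and shift the spectrum.

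First I would note that $ee^T$ has rank $1$, so it has the eigenvalue $0$ with multiplicity $n-1$ (its null space is the hyperplane $e^\perp$) and one further eigenvalue that can be read off from the trace: $\mathrm{trace}(ee^T) = n$, with associated eigenvector $e$ itself since $ee^T e = (e^T e)\, e = n\, e$. Next I would apply the elementary fact that if $Mv = \lambda v$ then $(aI + bM)v = (a + b\lambda)v$, with $a = 1-\alpha$ and $b = \alpha$. This immediately produces the eigenvalue $1-\alpha + n\alpha = 1 + (n-1)\alpha$ with eigenvector $e$, and the eigenvalue $1-\alpha$ on the $(n-1)$-dimensional subspace $e^\perp$.

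Since these two values account for all $n$ eigenvalues (one of multiplicity $1$ and one of multiplicity $n-1$), the spectrum of $G_\alpha$ is exactly $\{1-\alpha,\, 1+(n-1)\alpha\}$, completing the proof. No step is really obstructive here; the only care needed is to verify that the multiplicities sum to $n$ so nothing is missed, and to observe that both eigenvalues are positive under the hypothesis $\alpha \in (-1/(n-1), 1)$ which is consistent with $G_\alpha$ being positive definite as already noted in the text.
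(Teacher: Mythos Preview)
Your proof is correct and follows essentially the same route as the paper: both use the decomposition $G_\alpha = (1-\alpha)I_n + \alpha\, ee^T$, check that $e$ is an eigenvector with eigenvalue $1+(n-1)\alpha$, and that any vector in $e^\perp$ is an eigenvector with eigenvalue $1-\alpha$, the multiplicities adding to $n$.
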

\begin{proof}
Since $G_\alpha e=\alpha e e^{T}e+(1-\alpha)I_n e=(1+(n-1)\alpha)e$, then $(1+(n-1)\alpha,e^{T})$ is an eigenpair of $G_\alpha$. Take $x=[x_1,\ldots,x_n]^T$ with $x_1+\ldots+x_n=0$, so
$G_\alpha x=\alpha ee^{T} x+(1-\alpha) I_n x=(1-\alpha)x$. Thus $1-\alpha$ is the second eigenvalue of $G_\alpha$ with the algebraic multiplicity $n-1$. Therefore $\sigma(G_\alpha)=\{1-\alpha,1+(n-1)\alpha\}$. 
\end{proof}
As noted in \cite{Rivaz} (Proposition 1.2), a set of equiangular vectors $s_1,\ldots,s_n$ can be embedded into the positive coordinate axes so that $n-1$ entries of all of them are the same as $t\in(0,\tfrac{1}{\sqrt{n}})$ and the last one is $s=\sqrt{1-(n-1)t^2}$. Since $\alpha=2ts+(n-2)t^2$, then it can be shown that
\begin{equation}
s=\tfrac{\sqrt{(1-\alpha)(n-1)(n-2)+n\pm 2(n-1)\sqrt{(1-\alpha)(1+(n-1)\alpha)}}}{n} \quad ,\quad t=\tfrac{\sqrt{\alpha n+2(1-\alpha\mp\sqrt{(1-\alpha)(1+(n-1)\alpha)})}}{n}, \label{s,t1}
\end{equation}
so that the plus sign in the formula of $s$ must be selected and vise versa in $t$. For this reason $s\in(\frac{1}{\sqrt{n}},1)$ and $S$ with these vectors is positive definite with positive eigenvalues. $S$ can be rewritten as $sG_{t/s}$. From Lemma \ref{Leig} $\sigma(sG_{t/s})=\{ s-t,s+(n-1)t\}$. We define $\bar{S}_\alpha=sG_{t/s}$. It is notable that $\bar{S}_\alpha$ is the unique principal square root of $G_\alpha$, i.e., $\bar{S}_\alpha={G_ \alpha}^{1/2}$ \cite{Higham}. Since $\bar{S}_\alpha,S\in \text{EM}^n_\alpha$, then there exists an orthogonal matrix $Q$ so that $S=Q\bar{S}_\alpha$. Since $\bar{S}_\alpha$ is positive definite, $S$ is nonsingular and $\bar{S}_\alpha =(S^T S)^{1/2}$, then the last equality is the ``polar decomposition" of $S$ \cite{Higham1}. This equality can be interpreted as a transformation of the orthogonal matrices to the equiangular ones and vise versa ($Q=S\bar{S}_\alpha^{-1}$).\\
If $(\mu,x)$ is an eigenpair of $\bar{S}_\alpha$, then $(\mu^2,x)$ is an eigenpair of $G_\alpha$. So $\sigma(\bar{S}_\alpha)=\{\sqrt{1-\alpha},\sqrt{1+(n-1)\alpha}\}=\{s-t,s+(n-1)t\}$ and $s,t$ are as follows
\begin{equation}
s=\frac{\sqrt{1+(n-1)\alpha}+(n-1)\sqrt{1- \alpha}}{n}\quad ,\quad t=\frac{\sqrt{1+(n-1)\alpha}- \sqrt{1-\alpha}}{n}\cdot \label{s,t2}
\end{equation}
\begin{example}
For the Gram matrix $G_{1/2}={\footnotesize\left[\begin{array}{ccc}1&1/2&1/2\\1/2&1&1/2\\1/2 &1/2&1\end{array}\right]}$, the rounded form of the square roots of $G_{1/2}$ is dependent on scalars $s,t$ as follows
\begin{align}
\sqrt{G_{1/2}}&={\footnotesize \left[\begin{array}{ccc}0.9428&0.2357&0.2357\\0.2357& 0.9428&0.2357\\0.2357&0.2357 &0.9428\end{array}\right]},~ \text{for}~~0<t<\tfrac{1}{\sqrt{n}}~,\nonumber
\\\sqrt{G_{1/2}}&={\footnotesize \left[\begin{array}{ccc}0&0.7071&0.7071\\0.7071 &0&0.7071\\0.7071&0.7071&0 \end{array}\right]},~\text{for}~~ \tfrac{1}{\sqrt{n}}<t\leq\tfrac{1}{\sqrt{n-1}}.
\end{align}
Since $\sigma(\bar{S}_{1/2})=\{s-t,s+(n-1)t\}$, then the eigenvalues of first matrix are $0.7071,1.4142$ and those of the second one are $-0.7071,1.4142$. Since $\bar{S}_{1/2}$ must be positive definite and $s>0$, then the first one is $\bar{S}_{1/2}$. \label{psquare}
\end{example}
In the next theorem the lower and upper bounds for eigenvalues of equiangular matrices are presented.
\begin{theorem}
Let $(\lambda,x)$ be an eigenpair of an equiangular matrix $S\in \text{EM}^n_\alpha$ with $\Vert x\Vert =1$ then the following bounds hold:
\begin{equation}
\lambda _{\text{min}}(\bar{S}_\alpha)\leq\vert \lambda \vert\leq \lambda_{\text{max}} (\bar{S}_\alpha),
\end{equation}
where $\lambda_{min}$ and $\lambda_{max}$ stand for the minimum and maximum eigenvalues of $\bar{S}_\alpha$, respectively.
\end{theorem}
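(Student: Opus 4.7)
The plan rests on two clean facts. First, because $G_\alpha = S^{T}S$, the squared Euclidean length of $Sx$ equals the Rayleigh quotient of $G_\alpha$ at $x$. Second, $\bar{S}_\alpha$ is the positive square root of $G_\alpha$ (as noted before Example \ref{psquare}), so the eigenvalues of $\bar{S}_\alpha$ are exactly the positive square roots of those of $G_\alpha$. Chaining these together gives the bounds.

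First, I would start from $Sx = \lambda x$ with $\|x\|=1$. Since $S$ is real but $\lambda$ and $x$ may be complex, I would interpret $\|\cdot\|$ as the Hermitian norm and compute
\[
|\lambda|^{2} \;=\; |\lambda|^{2}\|x\|^{2} \;=\; (Sx)^{*}(Sx) \;=\; x^{*} S^{T} S \, x \;=\; x^{*} G_\alpha \, x.
\]
This reduces the problem to bounding the Rayleigh quotient of the real symmetric positive definite matrix $G_\alpha$.

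Next, I would invoke Rayleigh's theorem for $G_\alpha$: for every (possibly complex) unit vector $x$,
\[
\lambda_{\min}(G_\alpha) \;\le\; x^{*} G_\alpha \, x \;\le\; \lambda_{\max}(G_\alpha),
\]
which follows because $G_\alpha$ admits a real orthogonal diagonalization $G_\alpha = UDU^{T}$ and $x^{*}G_\alpha x = \sum_i d_i |(U^{T}x)_i|^{2}$ is a convex combination of the $d_i$. Combining with the previous display gives $\lambda_{\min}(G_\alpha) \le |\lambda|^{2} \le \lambda_{\max}(G_\alpha)$.

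Finally, since $\bar{S}_\alpha = G_\alpha^{1/2}$ is positive definite, its eigenvalues are the nonnegative square roots of the eigenvalues of $G_\alpha$; in particular $\lambda_{\min}(\bar{S}_\alpha) = \sqrt{\lambda_{\min}(G_\alpha)}$ and $\lambda_{\max}(\bar{S}_\alpha) = \sqrt{\lambda_{\max}(G_\alpha)}$ (explicitly $\sqrt{1-\alpha}$ and $\sqrt{1+(n-1)\alpha}$ by Lemma \ref{Leig}). Taking square roots in the previous inequality finishes the proof. There is no real obstacle here — the only subtlety is the technicality that $S$ need not be symmetric, so one must allow complex eigenpairs and use the conjugate transpose when passing from $\|Sx\|^2$ to the Rayleigh quotient; once that is done, everything follows from $G_\alpha = S^{T}S$ and standard spectral facts for real symmetric positive definite matrices.
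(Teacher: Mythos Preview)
Your proof is correct and follows essentially the same approach as the paper: both establish $|\lambda|^2 = x^* S^T S x = x^* G_\alpha x$ and then bound this Rayleigh quotient between $\lambda_{\min}(G_\alpha)$ and $\lambda_{\max}(G_\alpha)$, finishing by taking square roots. The only cosmetic difference is that the paper expands $G_\alpha = \alpha ee^T + (1-\alpha)I$ to obtain the explicit intermediate formula $|\lambda| = \sqrt{\alpha\,|e^T x|^2 + 1-\alpha}$ and then optimizes over $|e^T x|$, whereas you invoke the Rayleigh bounds directly; the conclusions and the essential mechanism are identical.
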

\begin{proof}
Since $Sx=\lambda x$, and $x^{*}S^{T}=\lambda^{*}x^{*}$, then $x^{*}S^{T}Sx=\lambda \lambda^{*} x^{*}x=\vert\lambda\vert^{2}\Vert x\Vert^{2}=\vert\lambda\vert^{2}$. On the other hand
\begin{align}
x^{*} S^{T}Sx &=x^{*}G_\alpha x=x^ {*}(\alpha ee^T+(1-\alpha)I)x=\alpha (\sum_{i=1} ^{n} x_i) (\sum_{i=1}^{n}x^{*}_i)+(1-\alpha)\Vert x\Vert^{2}\nonumber\\&=\alpha\big|\sum_{i=1}^{n}x_i\big|^{2}+1-\alpha=\alpha\vert e^Tx \vert^{2}+1-\alpha ~,
\end{align}
where is the vector of ones. Taking these equalities together implies 
\begin{equation}
\vert\lambda\vert =\sqrt{\alpha ~\vert e^T x \vert^{2}+1-\alpha}~.\label{eig}
\end{equation}
Equation \eqref{eig} describes the relationship between eigenvalues and eigenvectors of $S$. Maximum of $\vert e^T x\vert$ is attained if $x=\pm\frac{e}{\Vert e\Vert}$ which is an eigenvector of $\bar{S}_\alpha$. So $\vert\lambda\vert\leq\sqrt{\alpha n+1-\alpha}=\sqrt{1+(n-1)\alpha} =\lambda_{\text{max}}\bar{S}_\alpha$ if $0<\alpha<1$.
Likewise, minimum of $\vert e^T x\vert$ is attained if $e^{T}x=0$ which in this case $x\in\ker(e^{T}x)$. Also $\vert\lambda\vert\geq \sqrt{1-\alpha} =\lambda_{\text{min}}\bar{S}_\alpha$ if $0<\alpha<1$. The same result holds for case of $\alpha<0$. 
\end{proof} 
\begin{proposition}
The condition number of any matrix $S\in \text{EM}^n_\alpha$ relative to $2$-norm is equal to $\sqrt{1+\dfrac{n\alpha}{1-\alpha}}$ if $0<\alpha<1$ and $\sqrt{1+\dfrac{n\vert\alpha\vert}{1-(n-1)\vert\alpha\vert}}$ if $\frac{-1}{n-1}<\alpha<0$.
\end{proposition}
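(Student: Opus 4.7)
The plan is to leverage the fact that the $2$-norm condition number of $S$ equals the ratio of its largest to smallest singular value, and that the singular values of $S$ are precisely the square roots of the eigenvalues of $G_\alpha=S^TS$. Since Lemma \ref{Leig} already identifies the full spectrum of $G_\alpha$ as $\{1-\alpha,\,1+(n-1)\alpha\}$, the singular values of $S$ are just $\sqrt{1-\alpha}$ (multiplicity $n-1$) and $\sqrt{1+(n-1)\alpha}$ (multiplicity one). So the whole argument boils down to identifying which of these two quantities is larger and then simplifying the resulting quotient.

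First I would write $\kappa_2(S)=\|S\|_2\|S^{-1}\|_2=\sigma_{\max}(S)/\sigma_{\min}(S)$ and invoke Lemma \ref{Leig} to record the two candidate singular values. Then I would split into the two cases stated in the proposition. For $0<\alpha<1$ we have $1+(n-1)\alpha>1>1-\alpha>0$, hence $\sigma_{\max}(S)=\sqrt{1+(n-1)\alpha}$ and $\sigma_{\min}(S)=\sqrt{1-\alpha}$, yielding
\[
\kappa_2(S)=\sqrt{\tfrac{1+(n-1)\alpha}{1-\alpha}}=\sqrt{1+\tfrac{n\alpha}{1-\alpha}}.
\]
For $-\tfrac{1}{n-1}<\alpha<0$, writing $\alpha=-|\alpha|$, one checks $1-\alpha=1+|\alpha|>1$ while $1+(n-1)\alpha=1-(n-1)|\alpha|\in(0,1)$, so the roles of max and min swap: $\sigma_{\max}(S)=\sqrt{1+|\alpha|}$ and $\sigma_{\min}(S)=\sqrt{1-(n-1)|\alpha|}$, giving
\[
\kappa_2(S)=\sqrt{\tfrac{1+|\alpha|}{1-(n-1)|\alpha|}}=\sqrt{1+\tfrac{n|\alpha|}{1-(n-1)|\alpha|}}.
\]

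There is no real obstacle here; the admissible range $\alpha\in\bigl(-\tfrac{1}{n-1},1\bigr)$ (required for $G_\alpha$ to be positive definite, cf.\ the introduction) ensures both eigenvalues are strictly positive so all square roots are real and $\sigma_{\min}(S)>0$. The only thing that needs a moment's attention is correctly pairing the max/min of the two eigenvalues with the sign regime of $\alpha$, which is what forces the two distinct formulas in the statement. The final algebraic simplification $1+(n-1)\alpha=(1-\alpha)+n\alpha$ (and its analogue $1+|\alpha|=(1-(n-1)|\alpha|)+n|\alpha|$) converts the plain ratio into the advertised $\sqrt{1+\cdots}$ form.
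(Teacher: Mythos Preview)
Your proposal is correct and follows essentially the same approach as the paper: both express $\kappa_2(S)=\sqrt{\lambda_{\max}(S^TS)/\lambda_{\min}(S^TS)}$ and then invoke the spectrum of $G_\alpha$ from Lemma~\ref{Leig}. In fact your write-up is more thorough, since the paper simply declares the result ``obvious'' after the ratio formula, whereas you carry out the case split on the sign of $\alpha$ and the algebraic simplification explicitly.
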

\begin{proof}
Since $\Vert S\Vert_2 =\sqrt{\lambda_{\text{max}}(S^T S)}$ and $\Vert S^{-1}\Vert_2 =\sqrt{\lambda_{\text{max}}(SS^T)^{-1}}=\dfrac{1}{\sqrt{\lambda_{\text{min}}(S^T S)}}$, then $\kappa_2 (S)=\sqrt{\dfrac{\lambda_{\text{max}}(S^T S)}{\lambda_{\text{min}}(S^T S)}}$. So for any case of $\alpha$ the result is obvious.
\end{proof}
Note that $S$ converges to ill-conditioning as $\alpha\rightarrow 1$ or $\alpha\rightarrow-1/(n-1)$.
\section{Some generalizations of the Schur form} \label{schur}
\begin{proposition}
For $A\in\mathbb{R}^{n\times n}$ and $\alpha\in(0,1)$, there exists $S\in \text{EM}^n_\alpha$ so that $S^{-1}AS=T$ is a block upper triangular, with $1\times 1$ and $2\times 2$ blocks on its diagonal. The eigenvalues of $A$ are the eigenvalues of diagonal blocks of $T$. The $1\times 1$ blocks correspond to real eigenvalues, and the $2\times 2$ blocks to pairs of complex conjugate eigenvalues.
\label{STiS}
\end{proposition}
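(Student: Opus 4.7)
The plan is to reduce the statement to the classical real Schur decomposition composed with the SR decomposition recalled in the Introduction. First I would invoke the real Schur theorem to write $A = Q\tilde{T}Q^T$, with $Q$ orthogonal and $\tilde{T}$ block upper triangular, the $1\times 1$ diagonal blocks holding the real eigenvalues of $A$ and the $2\times 2$ blocks holding the complex conjugate pairs. This step is independent of $\alpha$ and is available for every real square matrix.

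Next I would factor $Q$ itself through the Equiangular Algorithm. Since $Q$ is nonsingular, SR produces $Q = SR$ with $S \in \text{EM}^n_\alpha$ and $R$ upper triangular. Using orthogonality, $Q^T = Q^{-1} = R^{-1}S^{-1}$, so
\[
T \;:=\; S^{-1} A S \;=\; S^{-1}(SR)\,\tilde{T}\,(R^{-1}S^{-1})\,S \;=\; R\,\tilde{T}\,R^{-1}.
\]
Thus the candidate equiangular matrix is simply the $S$-factor of the Schur orthogonal matrix, and what remains is to check that $R\tilde{T}R^{-1}$ retains the required block structure.

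The main technical point, though brief, is showing that conjugation by the upper triangular $R$ preserves the quasi-upper-triangular pattern of $\tilde{T}$. A short index chase that exploits the fact that both $R$ and $R^{-1}$ are upper triangular collapses the relevant sums and yields
\[
T_{i+1,i} \;=\; \frac{R_{i+1,i+1}}{R_{i,i}}\,\tilde{T}_{i+1,i}, \qquad T_{i+m,i} = 0 \ \text{ for all } m \geq 2,
\]
so every subdiagonal zero of $\tilde{T}$ is inherited by $T$, and the $1\times 1$ and $2\times 2$ diagonal block pattern carries over unchanged. Because similarity preserves the spectrum and a block triangular matrix has spectrum equal to the union of the spectra of its diagonal blocks, the eigenvalues of $A$ are exactly those of the diagonal blocks of $T$, which settles the statement. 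The only place any real work is needed is this last step, and even there the triangularity of $R$ and $R^{-1}$ makes the verification essentially a one-line calculation.
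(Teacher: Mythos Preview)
Your argument is correct and is essentially identical to the paper's: both invoke the real Schur form $A=Q\tilde T Q^T$, apply the SR decomposition $Q=SR$, and conclude $T=R\tilde T R^{-1}$ is block upper triangular with the same diagonal block pattern. You spell out the subdiagonal check a bit more explicitly than the paper does, but the route is the same.
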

\begin{proof}
From the Real Schur form $A=Q\Lambda Q^T$, where $Q$ is an orthogonal matrix and $\Lambda$ is a block upper triangular. From SR decomposition \cite{Rivaz} we have $Q=SR$, where $S\in \text{EM}^n_\alpha$ and $R$ is upper triangular. Then $A=Q\Lambda Q^T=SR\Lambda R^{-1} S^{-1}=STS^{-1}$. Therefore $T=R\Lambda R^{-1}$ is a block upper triangular, whose blocks are conformable with those of $\Lambda$.
\end{proof}
We want to know which matrices have equiangular eigenvectors. If for some special matrix $A$, there is a matrix $S\in \text{EM}^n_\alpha$ so that $S^{-1}AS=T=\text{diag}(t_1,\ldots,t_n)$, then $A$ has $n$ equiangular eigenvectors. We can provide a test to check this for a matrix. First we intoduce the upper triangular equiangular matrices.
\begin{lemma}
There is a unique triangular equiangular matrix in terms of $\alpha$ as follows
\begin{equation}
\hat{S}=\left[\footnotesize{\begin{array}{ccccc}1&\alpha &\alpha &\cdots &\alpha\\0&\sqrt{1-\alpha^2} &\frac{\alpha(1-\alpha)}{\sqrt{1-\alpha^2}}&\cdots & \frac{\alpha(1-\alpha)}{\sqrt{1-\alpha^2}}\\0&0&\ddots\qquad &~&\vdots\\\vdots &\vdots &\ddots &~&~ \\0&0&\cdots &~&~\end{array}}\right] \label{shat}
\end{equation}
which is obtained from the SR decomposition of the Identity matrix and its entries satisfies to the following
\begin{itemize}
\item $\hat{s}_{11}=1,$
\item $\hat{s}_{i(i+1)}=\cdots =\hat{s}_{in}=\hat{s}_{ii}-\dfrac{1-\alpha}{\hat{s}_{ii}},$
\item $\hat{s}_{ii}^2=1-(\hat{s}_{1i}^2+ \hat{s}_{2i}^2+\cdots+\hat{s}_{(i-1)i}^2).$
\end{itemize}
\end{lemma}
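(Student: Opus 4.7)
The plan is to prove this by induction on the column index, building $\hat S$ one column at a time from the two structural constraints it must satisfy: upper triangularity combined with unit-norm columns (normalization) and pairwise inner products equal to $\alpha$ (equiangularity). The three bullet equations will fall out as bookkeeping from each inductive step.

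First I would dispose of the base case. Because $\hat S$ is upper triangular, column~$1$ has only one nonzero entry, so $\hat s_1=\hat s_{11}e_1$; taking the standard sign convention (positive diagonal, consistent with an SR factor), normalization forces $\hat s_{11}=1$, giving the first bullet. The equations $\hat s_1^T\hat s_j=\alpha$ for $j\geq 2$ then yield $\hat s_{1j}=\alpha$ for all $j>1$, so row~$1$ is constant on the strict upper triangle; this is the base step for the main inductive claim I need: \emph{for every $k$, the values $\hat s_{kj}$ with $j>k$ do not depend on $j$.}

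For the inductive step, assume that for each $k<i$ there is a single value $c_k$ with $\hat s_{kj}=c_k$ for all $j>k$, and that the positive scalar $\hat s_{kk}$ has been determined. Looking at column~$i$: the above-diagonal entries are $\hat s_{ki}=c_k$ (since $i>k$), so normalization of $\hat s_i$ gives $\hat s_{ii}^{\,2}=1-\sum_{k<i}\hat s_{ki}^{\,2}$, which is the third bullet. Next, for any $j>i$ the equiangularity condition $\hat s_i^T\hat s_j=\alpha$ expands, using $\hat s_{kj}=c_k=\hat s_{ki}$ for $k<i$, to
\[
\sum_{k<i}c_k^{\,2}+\hat s_{ii}\hat s_{ij}=\alpha,
\qquad\text{hence}\qquad
\hat s_{ij}=\frac{\alpha-\sum_{k<i}c_k^{\,2}}{\hat s_{ii}},
\]
which is manifestly independent of $j$, completing the induction. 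Substituting $\sum_{k<i}c_k^{\,2}=1-\hat s_{ii}^{\,2}$ from the normalization rearranges this to $\hat s_{ij}=\hat s_{ii}-\frac{1-\alpha}{\hat s_{ii}}$, yielding the second bullet. Specializing to $i=2$ reproduces the explicit entries $\sqrt{1-\alpha^2}$ and $\frac{\alpha(1-\alpha)}{\sqrt{1-\alpha^2}}$ displayed in \eqref{shat}.

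Uniqueness is automatic from the construction: at each step the already-determined constants $c_1,\dots,c_{i-1}$ leave no freedom in $\hat s_{ii}$ beyond its sign, which is fixed by the positive-diagonal convention, and then $\hat s_{ij}$ for $j>i$ is determined by a single linear equation. Finally, to identify $\hat S$ with the SR factor of $I_n$, I would observe that $I_n=\hat S\,R$ with $R=\hat S^{-1}$, and that the inverse of an upper triangular matrix with positive diagonal is again upper triangular with positive diagonal, so this genuinely is the (unique) SR decomposition of $I_n$. The only real obstacle is purely notational—keeping the double index $(k,j)$ straight so that the symmetric roles of ``column~$i$ seen as the $i$-th column'' and ``column~$i$ seen as an upper-triangle entry of later columns'' are used consistently—but there is no genuine analytic difficulty.
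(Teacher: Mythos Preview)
Your proof is correct and follows essentially the same route as the paper: build $\hat S$ column by column, use the inductive hypothesis that each row is constant to the right of the diagonal, read off $\hat s_{ij}$ from the inner-product equation $\hat s_i^T\hat s_j=\alpha$, and then substitute the normalization identity $\sum_{k<i}\hat s_{ki}^2=1-\hat s_{ii}^2$ to obtain the clean form of the second bullet. Your write-up is actually somewhat more careful than the paper's---you make the inductive claim ``$\hat s_{kj}$ is independent of $j$ for $j>k$'' explicit and you address uniqueness and the identification with the SR factor of $I_n$, both of which the paper leaves implicit---but the underlying argument is the same.
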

\begin{proof}
Clearly $\hat{s}_{11}=1$. Since $\hat{s}_1^T\hat{s}_i=\alpha$, then $\hat{s}_{12}=\cdots=\hat{s}_{1n} =\dfrac{\alpha}{\hat{s}_{11}}$. Also $\hat{s}_i^T\hat{s}_j=\alpha$ $(i<j)$, therefore by induction
\begin{equation}
\hat{s}_{i(i+1)}=\hat{s}_{i(i+2)}=\cdots =\hat{s}_{in}=\frac{\alpha -(\hat{s}_{12}^2+\hat{s}_{23}+\cdots +\hat{s}_{(i-1)i}^2)}{\hat{s}_{ii}}\cdot \label{sij}
\end{equation}
On the other hand $\Vert \hat{s}_i\Vert=1$ then $(\hat{s}_{1i}^2+\cdots +\hat{s}_{(i-1)i}^2)=1-\hat{s}_{ii}^2$. Also from \eqref{shat}, $\hat{s}_{i(i+1)}=\hat{s}_{ij}$ $i=1,\ldots,n-2,~j=i+2,\ldots,n$. So the equation \eqref{sij} can be written more simpler as follows
\begin{align}
\hat{s}_{i(i+1)}=\hat{s}_{i(i+2)}=\cdots =\hat{s}_{in}&=\frac{\alpha -(\hat{s}_{1i}^2+\hat{s}_{2i}+\cdots +\hat{s}_{(i-1)i}^2)}{\hat{s}_{ii}}=\frac{\alpha -(1-\hat{s}_{ii}^2)}{\hat{s}_{ii}}\nonumber\\&=\hat{s} _{ii}-\frac{1-\alpha}{\hat{s}_{ii}},
\end{align}
where this equation is used for computation of $i$th row.
\end{proof}
\begin{theorem}
For nonsingular and nonsymmetric matrix $A\in\mathbb{R}^{n\times n}$ with the Schur form $A=QTQ^T$ and by the assumption $[T]_{ij}=t_{ij}$
\begin{enumerate}
\item If for a variable $i$ that $t_{ii}\neq t_{(i+1)(i+1)}$, there is a $\alpha\in (0,1)$ satisfies $\frac{\hat{s}_{i(i+1)}}{\hat{s}_{(i+1)(i+1)}}=\frac{t_{i(i+1)}}{t_{(i+1)(i+1)}-t_{ii}}$, whit $\hat{s}_{ij}\in\hat{S}\in \text{EM}^n_\alpha$. Then the column vectors of $S=Q\hat{S}$ form the eigenvectors of $A$, if $\hat{S}$ satisfies to the equality $T\hat{S}=\hat{S}\text{diag}(T)$.
\item If $t_{11}=\ldots =t_{nn}$, then $A=t_{11}I$ have an equiangular eigenspaces if $T$ is diagonal.
\end{enumerate}
Otherwise $A$ has no equiangular eigenvectors.
\end{theorem}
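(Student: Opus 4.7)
The plan is to seek equiangular eigenvectors of $A$ by working in the Schur basis and reducing the question to a compatibility problem for the unique upper triangular equiangular matrix supplied by the preceding lemma. Writing $A = QTQ^T$ with $Q$ orthogonal and $T$ upper triangular, and assuming the target identity $AS = SD$ for some $S \in \text{EM}^n_\alpha$ and diagonal $D$, I would set $\hat{S} := Q^T S$. Since $Q$ is orthogonal, $\hat{S}^T \hat{S} = S^T S = G_\alpha$, so $\hat{S}$ is itself equiangular with the same parameter $\alpha$, and the equation becomes $T\hat{S} = \hat{S} D$. Comparing the diagonal entries of both sides immediately forces $D = \text{diag}(T)$, so the problem reduces to: for which $T$ and $\alpha$ does there exist an equiangular $\hat{S}$ with $T\hat{S} = \hat{S}\,\text{diag}(T)$?

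I would then use the structural constraint that $T$ is upper triangular. For each $j$ such that $t_{jj}$ does not coincide with any earlier diagonal entry, the eigenvector of $T$ for eigenvalue $t_{jj}$ can be chosen supported on coordinates $\{1,\ldots,j\}$; since unit-norm eigenvectors for a simple eigenvalue are unique up to sign, the columns of $\hat{S}$ must agree, up to signs, with these canonical upper triangular eigenvectors. Absorbing the signs, $\hat{S}$ becomes upper triangular with positive diagonal and, by the preceding lemma, is completely determined by $\alpha$. Extracting the $(i,i+1)$ entry of $T\hat{S} = \hat{S}\,\text{diag}(T)$ yields $t_{i,i+1}\hat{s}_{i+1,i+1} = (t_{i+1,i+1}-t_{ii})\hat{s}_{i,i+1}$, which under $t_{ii} \neq t_{i+1,i+1}$ rearranges to the ratio condition of part~1. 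Each such adjacent-pair equation constrains $\alpha$ via the explicit entry formulas of the preceding lemma; the first hypothesis of part~1 is that a single $\alpha$ satisfies all of them, and the second -- that the resulting $\hat{S}$ solves the \emph{full} matrix identity $T\hat{S} = \hat{S}\,\text{diag}(T)$ -- ensures the non-adjacent entries also agree. Under both, $S := Q\hat{S}$ is equiangular and diagonalizes $A$, so its columns are the required eigenvectors; conversely, any equiangular diagonalizer of $A$ produces via the reduction above a compatible pair $(\alpha,\hat{S})$, so failure of either hypothesis yields the ``otherwise'' clause.

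For part~2, I would observe that when $t_{11}=\cdots=t_{nn}$, the upper triangular $T$ has a single repeated eigenvalue, so it is diagonalizable iff it is already diagonal; in that case $T = t_{11}I$ forces $A = t_{11}I$, and every equiangular basis of $\mathbb{R}^n$ diagonalizes $A$ trivially. The principal obstacles will be (i) making rigorous the claim that $\hat{S}$ must be upper triangular, particularly when non-adjacent diagonal entries of $T$ repeat, which requires a careful ordering of columns and handling of possible sign reversals; and (ii) articulating precisely why the adjacent-pair ratio conditions alone do not imply the full matrix identity -- this is the reason the theorem retains $T\hat{S} = \hat{S}\,\text{diag}(T)$ as an additional hypothesis rather than a consequence, and is the technical point requiring the most attention.
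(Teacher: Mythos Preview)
Your approach is essentially the paper's: reduce the eigenvector problem via $S=Q\hat{S}$ to the intertwining relation $T\hat{S}=\hat{S}\,\mathrm{diag}(T)$, read off the $(i,i+1)$ entry to obtain the ratio condition, and handle the equal-diagonal case by observing $T$ must then be diagonal. The paper's own proof is extremely terse---it simply asserts that inspecting the $(i,i+1)$ entry of $T\hat{S}=\hat{S}\,\mathrm{diag}(T)$ gives the condition, and that $A=Q\hat{S}\,\mathrm{diag}(T)\,\hat{S}^{-1}Q^T$ follows---so you are in fact supplying substantially more justification than the paper does, particularly for the converse (``otherwise'') clause, which the paper does not argue at all.

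One caution on your obstacle~(i): when you write ``absorbing the signs, $\hat{S}$ becomes upper triangular with positive diagonal,'' note that flipping the sign of a single column of an equiangular matrix destroys equiangularity (the inner products with the remaining columns become $-\alpha$, not $\alpha$). So the sign pattern is not freely adjustable; rather, the constraint $\hat{S}^T\hat{S}=G_\alpha$ with $\alpha>0$ itself forces a coherent sign choice across all columns once one is fixed. You should phrase the argument that way---the equiangularity hypothesis rigidifies the signs---rather than as a post-hoc normalization.
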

\begin{proof}
If for a $i$ that $t_{ii}\neq t_{(i+1)(i+1)}$ the entry $(i,i+1)$ in two side of $T\hat{S}=\hat{S}\text{diag}(T)$ is considered, so from the first part it can be seen that $A=Q\hat{S}\text{diag}(T)\hat{S}^{-1}Q^T$, then the result is proven. If all of $t_{ii}$ are equall, then the equality $T\hat{S}=\hat{S}\text{diag}(T)$ implies that all off-dioagonals of $T\hat{S}=\hat{S}\text{diag}(T)$ must be zero so the result is obtained.
\end{proof}
For a matrix $S\in \text{EM}^n_\alpha$ since $S^{-1}SS^{T}S= G_\alpha$, then the matrix $SS^T$ is similar to $G_\alpha$. Then from Lemma \ref{Leig}, $\sigma (SS^T)=\{1-\alpha ,1+(n-1)\alpha\}$ with the algebraic multiplicity $n-1$ at $1-\alpha$. Moreover from Proposition \ref{Tinv}
\begin{equation}
(SS^T)^{-1}=S^{-T}S^{-1}=\beta(\beta^{-1/2}S^{-T} \beta^{-1/2}S^{-1})=\beta(\tilde{S} \tilde{S}^T), \label{rowe}
\end{equation}
where $\tilde{S}=\beta^{-1/2}S^{-T}\in \text{EM}^n_{\alpha^{\prime}}$. Now suppose that $A\in\mathbb{R}^{n\times n}$ is symmetric matrix with two eigenvalues $\lambda_1,~\lambda_2=\lambda_1 +n(1-\lambda_1)$ with the algebraic multiplicity $n-1$ at $\lambda_1$, where $0<\lambda_1<1<\lambda_2$. Taking $\alpha=1-\lambda_1$, implies that $\sigma (A)=\{1-\alpha ,1+(n-1)\alpha\}$. Therefore for any $S\in EM^n_\alpha$, $A$ is orthogonally similar to $SS^T$. Hence from the Schur form $A=Q(SS^T)Q^T$, where $Q$ is orthogonal matrix. Then $A=(QS)(QS)^T=\tilde{S}\tilde{S}^T$, where $\tilde{S}\in \text{EM}^n_\alpha$. Now we can conclude the following Lemma.
\begin{lemma}
Suppose that a nonsingular symmetric matrix $A\in\mathbb{R}^{n\times n}$ has two distinct eigenvalues $\lambda_1$, $\lambda_2$ with the same signs by the algebraic multiplicity $n-1$ at $\lambda_1$. Then there exists a nonzero $r\in\mathbb{R}$ where $A$ can be factorized as $rSS^T$, uniquely where $S$ is an equiangular matrix.\label{LeiS}
\end{lemma}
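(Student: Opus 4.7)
The strategy is to absorb a scalar into $A$ so that what remains has the same spectrum and multiplicities as $SS^T$ for some $S\in\text{EM}^n_\alpha$, then produce the factorization via orthogonal similarity. Concretely, I would set $r=\frac{(n-1)\lambda_1+\lambda_2}{n}$ and $\alpha=\frac{\lambda_2-\lambda_1}{(n-1)\lambda_1+\lambda_2}$, which are the unique values making the system $\lambda_1=r(1-\alpha)$, $\lambda_2=r(1+(n-1)\alpha)$ hold. These equations are forced by matching the algebraic multiplicities $n-1$ and $1$ in $\sigma(SS^T)=\{1-\alpha,1+(n-1)\alpha\}$ (established just above \eqref{rowe}) with the corresponding multiplicities of $A$; inverting this $2\times 2$ linear system gives uniqueness of $(r,\alpha)$, while the hypothesis that $\lambda_1,\lambda_2$ are nonzero with a common sign guarantees $r\neq 0$.

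Next, I would verify $\alpha\in\bigl(-\tfrac{1}{n-1},1\bigr)$, the admissibility range for $\text{EM}^n_\alpha$. The two endpoints correspond to $\lambda_1=0$ and $\lambda_2=0$ respectively, both excluded by nonsingularity, so the remaining task is to rule out the boundary via a short sign analysis. When $\lambda_1,\lambda_2>0$ the denominator of $\alpha$ is positive and the inequalities $\alpha<1$ and $\alpha>-\tfrac{1}{n-1}$ collapse to $n\lambda_1>0$ and $n\lambda_2>0$ respectively; when $\lambda_1,\lambda_2<0$ the denominator flips sign and the inequalities reverse consistently, giving the same conclusion.

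With $\alpha$ admissible, I would pick any $S\in\text{EM}^n_\alpha$ (for example $\bar S_\alpha$ from \eqref{s,t2}), observe that $A/r$ and $SS^T$ are real symmetric matrices with matching spectra and multiplicities, and invoke the spectral theorem to obtain an orthogonal $Q$ with $A/r=Q(SS^T)Q^T=(QS)(QS)^T$. Setting $\tilde S:=QS$ yields $\tilde S^T\tilde S=S^T Q^T Q S=G_\alpha$, so $\tilde S\in\text{EM}^n_\alpha$ and $A=r\tilde S\tilde S^T$, as required. Uniqueness of $r$ (and hence of $\alpha$) was already settled in the first step, although $\tilde S$ itself is determined only up to left multiplication by a $G_\alpha$-preserving orthogonal factor. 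The main technical nuisance I expect is the sign case analysis in Step~2 confirming that $\alpha$ lies strictly inside the admissible interval in both sign regimes; everything else is a direct application of the spectral theorem together with observations already recorded in the excerpt.
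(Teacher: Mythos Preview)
Your proposal is correct and follows essentially the same route as the paper: solve the $2\times 2$ system $\lambda_1=r(1-\alpha)$, $\lambda_2=r(1+(n-1)\alpha)$ for $(r,\alpha)$, then use orthogonal similarity of $A/r$ with $SS^T$ (via the spectral/Schur decomposition, exactly as in the paragraph preceding the lemma) to produce $\tilde S=QS\in\text{EM}^n_\alpha$. The one difference is organizational. The paper splits into the cases $|\lambda_1|<|\lambda_2|$, where your formula gives $\alpha\in(0,1)$ directly, and $|\lambda_2|<|\lambda_1|$; in the latter it passes to $A^{-1}$ to reduce to the first case and then inverts back via \eqref{rowe}, landing on the negative $\alpha'=\frac{-\alpha}{1+(n-2)\alpha}$. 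Your direct sign analysis showing $\alpha\in(-\tfrac{1}{n-1},1)$ in both sign regimes is more streamlined and avoids the inverse detour, but the underlying mechanism---eigenvalue matching plus orthogonal conjugation of $SS^T$---is identical.
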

\begin{proof}
There are two cases:
\begin{enumerate}
\item $\vert\lambda_1\vert<\vert\lambda_2\vert$\\
Two equations $r(1-\alpha)=\lambda_1$ and $r(1+(n-1)\alpha)=\lambda_2$ have the solution $\alpha=\tfrac{\lambda_2-\lambda_1}{\lambda_2-\lambda_1+n\lambda_1}\in(0,1)$ and $r=\tfrac{\lambda_2-\lambda_1+n\lambda_1}{n}$. Then two eigenvalues $\lambda_1,\lambda_2$ are obtained. As noted before, $A$ can be factorized as $rSS^T$, where $S\in \text{EM}^n_\alpha$.
\item $\vert\lambda_2\vert<\vert\lambda_1\vert$\\
$A^{-1}$ has two eigenvalues $\lambda_1^{-1},\lambda_2^{-1}$. From the previous case $A^{-1}=r^{\prime}\hat{S}\hat{S} ^T$, with the corresponding $r^{\prime},\alpha^\prime$. Then from \eqref{rowe} $A=\dfrac{1}{r^\prime}(\hat{S}\hat{S}^T)^{-1}=\dfrac{\beta}{r^\prime}(SS^T)=rSS^T$, where $S\in \text{EM}^n_{\alpha^\prime}, \alpha^\prime =\frac{-\alpha}{1+(n-2)\alpha}$ and $r=\beta/r^{\prime}$.
\end{enumerate}
\end{proof}
\begin{theorem} 
(Generalization of the symmetric Schur form) Given a symmetric matrix $A\in\mathbb{R}^{n\times n}$ with at most $n-2$ zero eigenvalues and distinct nonzero eigenvalues. Then there are a matrix $S\in \text{EM}^n_\alpha$ with a real $0<\alpha<1$ in a neighborhood of zero and a real diagonal $D=\text{diag}(d_1,\ldots,d_n)$ so that $A=SDS^T$.Also if $\sigma(A)=\{\lambda_1,\ldots ,\lambda_n\}$, then $\sum d_i=\sum \lambda_i$. \label{SDST}
\end{theorem}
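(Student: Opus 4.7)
The plan is to use the spectral decomposition of $A$ and the implicit function theorem (IFT), regarding $\alpha$ as a perturbation parameter around the orthogonal case $\alpha=0$. Write $A=Q\Lambda Q^T$ with $Q\in O(n)$ and $\Lambda=\text{diag}(\lambda_1,\ldots,\lambda_n)$. Substituting $S=QM$ turns $A=SDS^T$ into $MDM^T=\Lambda$, and since $S^TS=M^TM$ the condition $S\in\text{EM}^n_\alpha$ is equivalent to $M\in\text{EM}^n_\alpha$. Using the polar-type representation $M=U\bar{S}_\alpha$ with $U\in O(n)$ from Section~\ref{inveig}, the problem reduces to finding $U\in O(n)$ that makes $\bar{S}_\alpha^{-1}U^T\Lambda U\bar{S}_\alpha^{-1}$ diagonal; its diagonal then supplies $D$.

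I would apply the IFT to the map $F:O(n)\times(-\delta_0,\delta_0)\to\mathbb{R}^{n(n-1)/2}$ returning the upper-triangular off-diagonal entries of $\bar{S}_\alpha^{-1}U^T\Lambda U\bar{S}_\alpha^{-1}$. At $(U,\alpha)=(I,0)$ we have $\bar{S}_0=I$ and $F(I,0)=0$. Linearizing at $U=I+\epsilon X$ with $X$ antisymmetric (tangent to $O(n)$ at $I$) yields $F(I+\epsilon X,0)=\epsilon[(\lambda_i-\lambda_j)X_{ij}]_{i<j}+o(\epsilon)$, which on the $\tfrac{n(n-1)}{2}$-dimensional space of antisymmetric matrices is diagonal in the canonical basis with entries $\lambda_i-\lambda_j$. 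Hence $D_UF|_{(I,0)}$ is invertible precisely when all $\lambda_i$ are pairwise distinct. In that case the IFT delivers a smooth curve $U(\alpha)\in O(n)$ on some interval $(0,\delta)$, and then $S(\alpha)=QU(\alpha)\bar{S}_\alpha\in\text{EM}^n_\alpha$ together with $D(\alpha)=\bar{S}_\alpha^{-1}U(\alpha)^T\Lambda U(\alpha)\bar{S}_\alpha^{-1}$ give the decomposition. The trace identity is immediate: $\sum_i\lambda_i=\tr(A)=\tr(SDS^T)=\tr(DS^TS)=\tr(DG_\alpha)=\sum_i d_i$, because $G_\alpha$ has unit diagonal.

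The main obstacle is when $A$ has $k\geq 2$ zero eigenvalues (still $k\leq n-2$), because then the Jacobian has a kernel spanned by those $X_{ij}$ with $\lambda_i=\lambda_j=0$ and the IFT fails verbatim. My remedy is a block reduction: in the spectral basis, seek $M$ of block form $M=\bigl[\begin{smallmatrix}M_{11}&M_{12}\\0&M_{22}\end{smallmatrix}\bigr]$ with $D=\text{diag}(D_+,0_k)$, so that $MDM^T=\Lambda$ collapses to $M_{11}D_+M_{11}^T=\Lambda_+$ on the $(n-k)\times(n-k)$ nonzero spectral block, whose eigenvalues are distinct and to which the IFT argument above applies. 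Enforcing equiangularity on the full $M$ then forces every column of $M_{12}$ to coincide with $v=\alpha M_{11}^{-T}e$ and forces the columns of $M_{22}$ to form an equiangular set in $\mathbb{R}^k$ of norm $\sqrt{1-\|v\|^2}$ and pairwise inner product $\alpha-\|v\|^2$. Since $\|v\|=O(\alpha)$, such an equiangular set exists once $\alpha$ is small enough, completing the construction.
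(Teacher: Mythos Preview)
Your argument is correct, but it follows a different route from the paper. The paper first performs the same block reduction to the nonsingular case (extending an equiangular $S_k$ on the nonzero spectral block by equiangular columns built from $e_{k+1},\ldots,e_n$ via EA), and then, for the nonsingular distinct-eigenvalue case, proceeds \emph{algebraically}: it observes that $\bar S_\alpha D\bar S_\alpha=\bar S_\alpha(DG_\alpha)\bar S_\alpha^{-1}$, so $DG_\alpha$ must be similar to $\Lambda$, and then computes $\det(xI-DG_\alpha)$ explicitly. Equating coefficients with $\det(xI-\Lambda)$ yields closed formulas for the elementary symmetric functions of $d_1,\ldots,d_n$ in terms of those of $\lambda_1,\ldots,\lambda_n$ and $\alpha$ (in particular $\sum d_i=\sum\lambda_i$, and $\sum_{i<j}d_id_j=(1-\alpha^2)^{-1}\sum_{i<j}\lambda_i\lambda_j$, etc.), so the $d_i$ are the roots of an explicit polynomial $g(x)$ that reduces to $p(x)=\prod(x-\lambda_i)$ at $\alpha=0$; continuity of roots (using simplicity of the $\lambda_i$) then gives real $d_i$ for $\alpha$ near $0$.

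Your implicit-function-theorem argument on $O(n)$ replaces this determinant calculation by a Jacobian check: at $(U,\alpha)=(I,0)$ the linearization of the off-diagonal map in the direction of a skew-symmetric $X$ has entries $(\lambda_i-\lambda_j)X_{ij}$, which is invertible exactly when the $\lambda_i$ are distinct. This is cleaner and more conceptual, and your trace computation $\tr(DG_\alpha)=\sum d_i$ recovers the first identity without any polynomial work. What you lose is the explicit system of equations for the symmetric functions of the $d_i$; the paper exploits those formulas immediately afterward (e.g., to show that the associated polynomial $g_n$ has nonreal roots when $A=rI$, and in the subsequent lemmas). So the paper's approach is heavier but feeds directly into its later corollaries, while yours gives a shorter self-contained proof of the theorem itself.
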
 
\begin{proof}
without loss of generality we assume thet $A$ is nonsingular with distinct eigenvalues. Because for $A=\text{diag}(\lambda_1,\ldots ,\lambda_k,0,\ldots,0)$ if $\text{diag}(\lambda_1,\ldots ,\lambda_k)=S_kD_kS^T_k$, where $D_k=\text{diag}(d_1,\ldots ,d_k)$, then one can obtain the decomposition $A=SDS^T$ as follows
\begin{equation}
A=\left[\footnotesize{\begin{array}{c:c}
\lambda _1\qquad &~\\\ddots &0\\ \qquad\lambda _k&~\\ \hdashline ~&~\\0& 0 \\ \end{array}}\right]= \left[\footnotesize{\begin{array}{c:c}
~ &~\\S_k&~\\ ~&\tilde{S}_{n-k} \\ -------&~\\0 &~ \end{array}}\right] \left[\footnotesize{\begin{array}{c:c}
d_1\qquad &~\\\ddots &0\\ \qquad d_k&~\\ \hdashline ~&~\\0& 0 \\ \end{array}}\right] \left[\footnotesize{\begin{array}{c:c}
~ &~\\S_k&~\\ ~&\tilde{S}_{n-k} \\ -------&~\\0 &~ \end{array}}\right]^T,
\end{equation}
where $S_k$ is extended by obtaining $\tilde{S}_{n-k}$ from the vectors $e_{k+1},\ldots ,e_n$ by means of the equiangular Algorithm.\\
Now if we find a diagonal matrix $D$ so that the matrix $\bar{S}_\alpha D\bar{S}_\alpha$ is orthogonally similar to $A$ as $A=P\bar{S}_\alpha D\bar{S}_\alpha P$, then the proof is completed: since $P\bar{S}_\alpha\in EM_\alpha^n$, then we set $S=P\bar{S}_\alpha$ so $A=SDS^T$.
\\For obtaining $D$ we can write $\bar{S}_\alpha D\bar{S}_\alpha=\bar{S}_\alpha D{\bar{S}_\alpha}^2{\bar{S}_\alpha} ^{-1}=\bar{S}_\alpha DG_{\alpha}{\bar{S}_\alpha}^{-1}$, which indicates that $DG_{\alpha}$ must be similar to $\Lambda$. So the characteristic polynomial of them are the same. We discuss the characteristic polynomial of $DG_{\alpha}$ as follows
\begin{equation}
\text{det}(xI-DG_{\alpha})=\text{det} \footnotesize{\begin{bmatrix}x-d_{1}&-\alpha d_{1}&\cdots &-\alpha d_{1}\\-\alpha d_{2} &x-d_{2}&~&\vdots\\\vdots &~&\ddots &~\\-\alpha d_{n} &\cdots &~&x-d_{n}\end{bmatrix}} =(-1)^{n}\alpha ^{n}(\prod_{i=1}^{n}d_{i})~\text{det} \footnotesize{ \begin{bmatrix}\tfrac{x-d_{1}}{-\alpha d_{1}}&1&\cdots &1\\1&\tfrac{x-d_{2}}{-\alpha d_{2}}&~&\vdots\\\vdots &~&\ddots &1\\1&\cdots &1&\tfrac{x-d_{n}}{-\alpha d_{n}} \end{bmatrix}}\cdot \label{detGn1}
\end{equation}
Let $G_n=\footnotesize{\begin{bmatrix} x_1&1&\cdots&1\\1&x_2&~&\vdots\\\vdots &~&\ddots&1 \\1&\cdots&1&x_n \end{bmatrix}}$ with $x_i=\dfrac{x-d_{i}}{-\alpha d_{i}}$. Then the following recurrence is obtained\\ $\text{det}(G_n)=(x_n-1)~\text{det}(G_{n-1})+(-1)^{n-1}(1-x_1)~\text{det}(\tilde{G}_{n-1})$, where $\tilde{G}_{n-1}=\footnotesize{
\begin{bmatrix}1&1&\cdots&1\\x_2&1& ~&1\\\vdots &~&\ddots &\vdots\\1&\cdots &x_{n-1}&1\end{bmatrix}}$. The term $\text{det}(\tilde{G}_{n-1})$ can be computed recursively as
\begin{equation}
\text{det}(\tilde{G}_{n-1})=-(x_{n-1}-1)~\text{det} (\tilde{G}_{n-2})=\cdots =(-1)^{n-2}(x_2-1)\cdots(x_{n-1}-1)=(-1)^{n-2} \prod _{i=2}^{n-1} (x_i-1)\cdot
\end{equation}
Since $\text{det}(G_2)=x_1x_2-1=(x_1-1)(x_2-1)+(x_1-1) +(x_2-1)$, then we can write
\begin{align}
\text{det}(G_n)&=(x_n-1)~\text{det}(G_{n-1})+(-1)^{n-1}(1-x_1)\cdot (-1)^{n-2}\prod_{i=2}^{n-1} (x_i-1)\nonumber\\&=(x_n-1)\cdot \text{det}(G_{n-1})+\prod_{i=1}^{n-1} (x_i-1)=\prod_{i\neq n} (x_i-1)+\cdots +\prod_{i\neq 1} (x_i-1)+\prod_{i=1}^{n} (x_i-1)\cdot \label{detGn2}
\end{align}
Eventually replacing \eqref{detGn2} in \eqref{detGn1} and simplifying results the following equations.
\begin{align}
\text{det}(xI-DG_{\alpha})=&(-1)^{n}\alpha ^{n}(\prod_{i=1}^{n}d_{i})\left[\prod_{i\neq n}(\tfrac{x-d_{i}}{-\alpha d_{i}}-1)+\cdots +\prod_{i\neq 1}(\tfrac{x-d_{i}}{-\alpha d_{i}}-1)+\prod_{i=1}^n (\tfrac{x-d_{i}}{-\alpha d_{i}}-1)\right]\nonumber\\=& (-1)^{n}\alpha ^{n}(\prod_{i=1}^{n}d_{i})~\frac{1}{(-1)^{n}\alpha ^{n}(\prod_{i=1}^{n}d_{i})}~[(-\alpha d_{n}\prod_{i\neq n}(x-d_{i}(1-\alpha))\nonumber\\&- \cdots-\alpha d_{1}\prod_{i\neq 1}(x-d_{i}(1-\alpha))+\prod_{i=1}^{n}(x-d_{i}(1-\alpha))]=\cdots
\end{align}
\begin{align}
=&x^{n}+(-\alpha +\alpha -1)(\sum_{i=1}^{n}d_{i})x^{n-1}+(-2\alpha (\alpha -1)+(\alpha -1)^2)(\sum_{1\leq i< j\leq n}d_{i}d_{j})x^{n-2}\nonumber\\&+\cdots +(-(n-1)\alpha (\alpha -1)^{n-2}+(\alpha -1)^{n-1})(\sum _{1\leq i_{j}\leq n}d_{i_{1}}\cdots d_{i_{n-1}})x+(-n\alpha (\alpha -1)^{n-1}+(\alpha -1)^n)d_{1}\cdots d_{n}\nonumber\\=&x^{n}- (\sum_{i=1}^{n} d_{i})x^{n-1}+(1-\alpha ^2)(\sum_{1\leq i< j\leq n}d_{i}d_{j}) x^{n-2}-\cdots -(\alpha -1)^{n-2} ((n-2)\alpha +1)(\sum_{1\leq i_{j}\leq n}d_{i_{1}}\cdots d_{i_{n-1}}) x\nonumber\\&-(\alpha -1)^{n-1}(1+(n-1)\alpha) d_{1}\cdots d_{n}\cdot
\end{align}
On the other hand, the characteristic polynomial of $\Lambda$ is illustrated as follows
\begin{align}
p(x)=&(x-\lambda_{1})(x-\lambda _{2})\cdots(x-\lambda_{n})=x^{n}-(\sum _{i=1}^{n}\lambda_{i})x^{n-1}+ (\sum_{1\leq i< j\leq n}\lambda_{i}\lambda_{j})x^{n-2} \nonumber\\&-\cdots +(-1)^{n-1}(\sum_{1\leq i_{j}\leq n}\lambda_{i_{1}}\cdots \lambda_{i_{n-1}})x +(-1)^{n}\lambda _{1}\cdots \lambda_{n}\cdot
\end{align}
Since det$(xI-DG_\alpha)=\text{det}(xI-\Lambda)$, then
\begin{align}
&\sum_{i=1}^{n}d_{i}=\sum_{i=1}^{n} \lambda_{i}=c_1 \Rightarrow \text{trace}(D)=\text{trace}(\Lambda),\nonumber \\& \sum_{1\leq i< j\leq n}d_{i}d_{j}=\frac{1}{1-\alpha ^2}\sum_{1\leq i< j\leq n}\lambda_{i}\lambda_{j}=c_2,\nonumber\\&\qquad \vdots\nonumber\\& \sum_{1\leq i_{j}\leq n}d_{i_{1}}\cdots d_{i_{n-1}}=\frac{1}{(1-\alpha)^{n-2}(1+(n-2)\alpha)}\sum_{1\leq i_{j}\leq n}\lambda_{i_{1}}\cdots \lambda_{i_{n-1}}=c_{n-1},\nonumber\\&d_{1}\cdots d_{n}=\frac{1}{(1-\alpha)^{n-1}(1+(n-1)\alpha)}\cdot\lambda_{1}\cdots \lambda_{n}=c_{n}\cdot \label{d_i}
\end{align}
So $d_{1},~d_{2},\ldots,d_{n}$ are the roots of the following polynomial 
\begin{equation}
g(x)=x^{n}-c_{1}x^{n-1}+\cdots+(-1)^{n-1}c_{n-1} x+ (-1)^{n}c_{n}\cdot \label{g_n}
\end{equation}
Then the necessary condition for implementation of this factorization, is that the roots of $g(x)$ are all real, because $SDS^T$ must be symmetric. The scalar $\alpha$ in the coefficients of $g(x)$ can be considered as the perturbation in those of $p(x)$. Since the roots of $p(x)$ are distinct, then by the ``intermediate value theorem" there is a $\alpha$ in the neighborhood of zero for which the roots of $g(x)$ are all ``real" and possibly distinct. One can use an argument from the complex analysis \cite{Ahlfors}: the eigenvalues of $A$ are the continuous function of $A$, even though they are not differentiable.
\end{proof}
One can provide a counter example for while the eigenvalues of $A$ are not distinct: let $A=\Lambda=\text{diag}(0,1,1)$ then $D=\text{diag}(0,1+\sqrt{ \tfrac{\alpha ^2}{1-\alpha ^2}}i,1- \sqrt{\tfrac{\alpha ^2}{1-\alpha ^2}}i)$ which is not real and symmetric. So $A$ wont be factorized as $SDS^T$. In general if $A$ be a factor of identity matrix: let $A=rI=SDS^T$, then $rS^{-1}S^{-T}=rG_\alpha ^{-1}=D$ that satisfies $G_\alpha$ is diagonal which is a contradiction. Therefore in this case the corresponding $g(x)$ in \eqref{g_n} has at least two nonreal roots. Note that distinction of the eigenvalues of $A$ is the sufficient condition for the Theorem \ref{SDST} but not the necessary. For example if $A$ be symmetric by two nonzero eigenvalues with the algebraic multiplicity $n-1$ at one of them, then by the Lemma \ref{LeiS} the decomposition $A=rSS^T$ is possible and it suffics to let $D=rI$.
\begin{proposition}
If $r\neq 0$ and $0<\alpha<1$, then the following polynomial has at least two nonreal roots. $(n\geq 2)$\label{root}
\begin{equation}
g_{n}(x)=x^n-nrx^{n-1}+\dfrac{\binom{n}{2}r^2}{1-\alpha ^2}x^{n-2}-\dfrac{\binom{n}{3}r^3}{(1-\alpha)^2(1+2\alpha)}x^{n-3} +\cdots +(-1)^n\dfrac{r^n}{(1-\alpha)^ {n-1}(1+(n-1)\alpha)}\cdot
\end{equation} \label{Pro}
\end{proposition}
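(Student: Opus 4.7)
The plan is to recognize $g_n(x)$ as the particular instance of the polynomial \eqref{g_n} that arises when we apply the framework of Theorem \ref{SDST} to the matrix $A=rI_n$, whose spectrum is $\{r,r,\ldots,r\}$. Substituting $\lambda_1=\cdots=\lambda_n=r$ into the relations \eqref{d_i} yields
$$\sum_{1\le i_1<\cdots<i_k\le n}\lambda_{i_1}\cdots\lambda_{i_k}=\binom{n}{k}r^k,$$
and the stated coefficients of $g_n$ follow directly. Thus the roots of $g_n$ are exactly the candidate diagonal entries $d_1,\ldots,d_n$ of any putative factorization $rI_n=SDS^T$ with $S\in\text{EM}^n_\alpha$ and $D=\text{diag}(d_1,\ldots,d_n)$.

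Next, I would argue by contradiction: suppose that all $n$ roots $d_1,\ldots,d_n$ of $g_n$ are real and set $D=\text{diag}(d_1,\ldots,d_n)$. Because $\bar{S}_\alpha$ is real symmetric, so is $\bar{S}_\alpha D\bar{S}_\alpha$; hence this matrix is diagonalizable, and therefore so is its conjugate
$$\bar{S}_\alpha^{-1}(\bar{S}_\alpha D\bar{S}_\alpha)\bar{S}_\alpha=DG_\alpha.$$
By construction the relations \eqref{d_i} with $\lambda_i=r$ are precisely the conditions that equate $\det(xI-DG_\alpha)$ to $(x-r)^n$, so $DG_\alpha$ has $r$ as its only eigenvalue; combined with diagonalizability this forces $DG_\alpha=rI_n$, i.e.\ $D=rG_\alpha^{-1}=r\beta G_{\alpha'}$. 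But $\alpha'=-\alpha/(1+(n-2)\alpha)$ is nonzero for $\alpha\in(0,1)$, so $G_{\alpha'}$ has nonzero off-diagonal entries, contradicting the diagonality of $D$.

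Therefore $g_n$ has at least one nonreal root, and since its coefficients are real, nonreal roots occur in complex-conjugate pairs; hence $g_n$ admits at least two nonreal roots, as claimed. The delicate step is the upgrade from equality of characteristic polynomials to the matrix equality $DG_\alpha=rI_n$; that step hinges on diagonalizability of $DG_\alpha$, which is why routing the argument through the symmetric matrix $\bar{S}_\alpha D\bar{S}_\alpha$ is essential. This makes the proposition a clean corollary of the Schur-type decomposition developed in Theorem \ref{SDST}, formalizing the remark made in the paragraph preceding the statement.
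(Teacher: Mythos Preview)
Your proposal is correct and follows essentially the same approach as the paper: recognize $g_n$ as the polynomial \eqref{g_n} from the proof of Theorem \ref{SDST} specialized to $A=rI$, then derive a contradiction from the impossibility of $rI=SDS^T$ (which forces $D=rG_\alpha^{-1}$, a nondiagonal matrix). The paper's proof is terser---it simply invokes the remark preceding the proposition that no such factorization exists and computes the coefficients---whereas you make explicit the step that the paper leaves to the machinery of Theorem \ref{SDST}: namely, that if the $d_i$ are real then $\bar S_\alpha D\bar S_\alpha$ is real symmetric, hence diagonalizable, so $DG_\alpha$ having characteristic polynomial $(x-r)^n$ forces $DG_\alpha=rI$.
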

\begin{proof}
Since the Theorem \ref{SDST} does not hold for $A=rI$, then there is no real symmetric $D$ satisfies $A=SDS^T$. So using \eqref{d_i} the coefficients of $g(x)$ are computed as follows
\begin{equation}
c_1=\sum _{i=1}^n r =nr,\quad c_2=\frac{1}{1-\alpha ^2}\sum _{1\leq i<j\leq n}r^2 =\dfrac{\binom{n}{2}r^2}{1-\alpha ^2}~,\quad\cdots\quad ,\quad c_n =\frac{r^n}{(1-\alpha)^{n-1}(1+(n-1)\alpha)}\cdot
\end{equation}
\end{proof}
\begin{example}
One can check the accuracy of Lemma \eqref{root} for the cases $n=2,3$ with a nonzero real $r$: for $n=2$ that $g_2 (x)=x^2 -2rx+\frac{r^2}{1-\alpha ^2}$, the two roots of $g_2$ are nonreal: $x_{1,2}=r\pm \frac{r\alpha}{\sqrt{1-\alpha ^2}}i$ and $g_2>0$. For $n=3$, $g_3(x)=x^3 -3rx^2 +\frac{3r^2}{1-\alpha^2}x-\frac{r^3}{(1-\alpha)^2 (1+2\alpha)}$ then $g_3^\prime(x)=3x^2-6rx +\frac{3r^2}{1-\alpha^2}$. Clearly $g_3^\prime=3g_2>0$. Therefore $g_3$ is an increasing function with only one real root. In general, $g_n^\prime=ng_{n-1}$. By induction $g_{n-1}$ has at most $n-3$ real roots and by intermediate value theorem $g_n$ has at most $n-2$ real roots. \qquad\qquad\qquad\qquad\qquad\qquad \qquad $\lozenge$
\end{example}
\begin{example}
Assume that $\Lambda =\text{diag}(1,2,3)$. Using Theorem \ref{SDST} one can find a bound for $\alpha$ for which the factorization $\Lambda=SDS^T$ holds for some $S\in \text{EM}^n_\alpha$. So we can write $p(x)=x^3 -6x^2 +11x-6$ and $g(x)=x^3 -6x^2 +\frac{11}{1-\alpha^2}x-\frac{6}{(1-\alpha)^2(1+2\alpha)}$. Considering MATLAB $\mathtt{roots}$ function, the roots of $g$ are all real if $\alpha\leq 0.1843$, by rounding.\qquad\qquad\qquad\qquad\qquad \qquad\qquad\qquad\qquad \qquad\qquad\qquad\qquad\qquad \qquad\qquad\qquad\qquad \qquad\qquad\qquad\qquad\quad~~ $\lozenge$
\end{example}
As noted before Lemma \ref{LeiS} shows that Theorem \ref{SDST} holds for a class of symmetric matrices with two nonzero eigenvalues with the multiplicity $n-1$ for one of them. In these cases the corresponding diagonal matrix $D$ is a factor of identity matrix. 
\begin{example}
If $\Lambda =\text{diag}(1,1,2)$, then from Lemma \ref{LeiS} $r=4/3$ and $\alpha =1/4$ that $D$ is the same $(4/3)I_3$. Also by \eqref{s,t} $s=0.9856$ and $t=0.1196$ by rounding. Therefore $\bar{S}_\alpha=0.9856G_{0.1213}$ and Theorem \eqref{SDST} results\\
$P=\left[\footnotesize{\begin{array}{ccc} ~~0.8059&-0.1310&0.5774\\-0.2895&~~ 0.7634& 0.5774\\-0.5164&-0.6325&0.5774 \end{array} }\right]$ and $S=P^T \bar{S}_\alpha=\left[\footnotesize{\begin{array}{ccc}~~0.6979&-0.2507&-0.4472\\-0.1134& ~~0.6612&-0.5477\\~~0.7071&~~0.7071& ~~0.7071\end{array} }\right]$. Then $\Lambda=SDS^T$. \qquad ~~$\lozenge$
\end{example}
We provide two examples of special cases of Lemma \ref{Pro}.
\begin{example}
Let $r=1-\alpha$ and by assumption that $0<d=\frac{\alpha}{1-\alpha}<\infty$, the general term of $g_n$ is $c_k =\frac{\binom{n}{k}r^k}{(1-\alpha)^{k-1}(1+(k-1)\alpha)}= \frac{\binom{n}{k}(1-\alpha)}{1+(k-1)\alpha}\frac{\binom{n}{k}}{1+dk}$. Therefore $g_n(x)= x^n-\sum_{k=1}^n \frac{\binom{n}{k}}{a_k}x^{n-k}$ has at least two nonreal roots, where $a_k$ is an arithmetic sequence with the initial term $a_1=1+d>1$ and the common difference $d$.
\end{example}
\begin{example}
Let $r=1+(k-1)\alpha$ and by assumption that $0<d=\frac{\alpha}{1-\alpha}<\infty$, the general term of $g_n$ is $c_k=\frac{\binom{n}{k}r^k}{(1-\alpha)^{k-1}(1+(k-1)\alpha)}= \binom{n}{k}(\frac{1+(k-1)\alpha}{1-\alpha})^{k-1}=\binom{n}{k}(1+dk)^{k-1}$. Therefore $g_n(x)= x^n-\sum_{k=1}^n \binom{n}{k} (a_k)^{n-1}x^{n-k}$ has at least two nonreal roots, where $a_k$ is an arithmetic sequence with the initial term $a_1=1+d>1$ and the common difference $d$.
\end{example}
Another strong counter example is the case that $A$ has an eigenvalue with multiplicity less than $n-1$, where $n$ is the number of nonzero eigenvalues. Without loss of generality assume that $A=\text{diag}(\lambda_1,\lambda_2,\ldots,\lambda_n)$ is nonsingular where $\lambda_1=\cdots=\lambda_i$ $(i<n-1)$ and $n>3$. We can prove it in the following lemma.
\begin{lemma}
If the nonzero eigenvalues of $A$ is $\lambda_1,\lambda_2,\ldots,\lambda_n$ $(n>3)$ where the $n-k$ of $\lambda_i$'s are equal as $\lambda_{i_1}=\cdots=\lambda_{i_{n-k}}$ $(2\leq k\leq n-2)$. Then there are no $S\in \text{EM}^n_\alpha$ with $0<\alpha<1$ and real diagonal $D$ satisfy $A=SDS^T$. \label{naghz}
\end{lemma}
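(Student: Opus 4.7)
The plan is to argue by contradiction. Suppose $A=SDS^T$ with $S\in\mathrm{EM}^n_\alpha$, $\alpha\in(0,1)$, and $D=\mathrm{diag}(d_1,\ldots,d_n)$. Since $S^TS=G_\alpha$, the identity $A=S(DG_\alpha)S^{-1}$ shows that $A$ and $M:=DG_\alpha$ are similar, so they share the same spectrum with multiplicities. Let $\mu$ be the eigenvalue of $A$ of multiplicity $m:=n-k$; since $A$ is symmetric its geometric multiplicity also equals $m$, hence $\mathrm{rank}(M-\mu I)=k$. Moreover $\det A\ne 0$ forces $d_i\ne 0$ for every $i$.

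The core step is a rank analysis of the diagonal-plus-rank-one matrix
\[
M-\mu I=\bigl((1-\alpha)D-\mu I\bigr)+\alpha(De)e^T,
\]
whose diagonal entries are $\delta_i:=(1-\alpha)d_i-\mu$. If every $\delta_i\ne 0$, the matrix-determinant lemma gives $\mathrm{rank}(M-\mu I)\ge n-1$, contradicting $\mathrm{rank}=k\le n-2$. Hence $J:=\{i:\delta_i=0\}\ne\varnothing$; for $i\in J$ we have $d_i=\mu/(1-\alpha)\ne 0$, so the $i$-th row of $M-\mu I$ equals the non-zero multiple $\alpha d_i e^T$ of $e^T$. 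Using one such row to clear the rank-one contribution from every row with $i\notin J$ reduces the latter to the independent vectors $\delta_i e_i^T$. Therefore $\mathrm{rank}(M-\mu I)=1+(n-|J|)$, which equals $k$ exactly when $|J|=m+1\ge 3$. So at least three of the $d_i$'s coincide at $x_0:=\mu/(1-\alpha)$.

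Equivalently, the polynomial $g$ of \eqref{g_n}, whose roots are $d_1,\ldots,d_n$, must have $x_0$ as a zero of multiplicity $\ge m+1$, yielding the $m+1$ simultaneous conditions $g^{(j)}(x_0)=0$, $j=0,\ldots,m$, in the single unknown $\alpha\in(0,1)$. To conclude, the plan is to reduce this to Proposition \ref{root}: splitting $S=[S_1\,|\,S_2]$ and $D=\mathrm{diag}(x_0 I_{m+1},D_2)$ decomposes the hypothesised factorization as $A=x_0 S_1 S_1^T+S_2 D_2 S_2^T$, where $S_1^TS_1$ is the $(m{+}1)\times(m{+}1)$ equiangular Gram matrix with eigenvalues $1-\alpha$ (multiplicity $m$) and $1+m\alpha$. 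Comparing the characteristic polynomials of the two sides and applying to the residual degree-$(k-1)$ polynomial (which determines the free entries of $D_2$) the same counting argument that produced at least two non-real roots in Proposition \ref{root} yields the contradiction. The main obstacle is exactly this last step, namely making rigorous how the coupling through $G_\alpha$ between the coincident block $x_0 I_{m+1}$ and $D_2$ propagates the scalar-multiple-of-identity obstruction of Proposition \ref{root} into the full residual system, uniformly over all admissible choices of the remaining eigenvalues $\lambda_{j_1},\ldots,\lambda_{j_k}$ of $A$.
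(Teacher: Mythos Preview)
Your rank analysis of $M-\mu I=DG_\alpha-\mu I$ is correct and sharper than what the paper does: writing it as the diagonal-plus-rank-one matrix $\big((1-\alpha)D-\mu I\big)+\alpha(De)e^T$ and row-reducing gives $\mathrm{rank}=1+(n-|J|)$ whenever $J\ne\varnothing$, so the multiplicity condition forces exactly $m+1$ of the $d_i$ to equal $x_0=\mu/(1-\alpha)$.

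The gap you flag is genuine, and it does not close along the line you sketch. With $m+1$ entries pinned at $x_0$, only $k-1$ diagonal entries remain free, while matching the residual degree-$k$ factor of the characteristic polynomial to $\prod_j(x-\lambda_j')$ imposes $k$ scalar conditions; for each fixed $\alpha$ the system is overdetermined by exactly one equation. But $\alpha$ itself ranges over $(0,1)$, so that single excess condition can be satisfied at an isolated value of $\alpha$---and in fact for $n=4$, $\mu=1$, $\{\lambda_1,\lambda_2\}=\{5,\,1/10\}$ a direct computation produces a consistent pair $(\alpha,d_4)\approx(0.566,\,0.186)$ for which $DG_\alpha$ has spectrum $\{1,1,5,1/10\}$. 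Proposition~\ref{root} treats only the scalar case $A=rI$ and gives no control over this coupled residual system, so your block splitting $A=x_0S_1S_1^T+S_2D_2S_2^T$ cannot manufacture the contradiction uniformly in $\alpha$.

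The paper's argument takes a different route and does not use your polynomial reduction. It passes to $\tilde S=\beta^{-1/2}S^{-T}\in\mathrm{EM}^n_{\alpha'}$, subtracts $\lambda G_{\alpha'}$ from $\tilde S^T A\tilde S=\beta^{-1}D$ to exhibit a matrix of rank at most $k$ whose entries are $d_i/\beta-\lambda$ on the diagonal and $-\lambda\alpha'$ off it, and then invokes Gershgorin after sending $\alpha\to 0$ to force that matrix to be nonsingular. The crucial step---the assertion that ``the assumed matrix decomposition is possible for the numbers between zero and $\alpha$''---is stated without justification, so the paper's proof has its own gap at precisely the analogous logical juncture.
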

\begin{proof}
Without loss of generality assume that $A=\text{diag}(\lambda_1,\ldots,\lambda_n)$ is diagonal with no zero eigenvalue. For simplicity we suppose that $\lambda_{k+1}=\cdots=\lambda_n =\lambda$. Otherwise by multiplying $A$ by permutation matrices from left and right the desired form is obtained. Let us suppose for a contradiction that there are $S$ and $D$ satisfy the hypothesize of the problem so $S^{-1}AS^{-T}=D$. From the equations \eqref{ninv} and \eqref{sipsjp} and its outcome in section \ref{inveig}, $S^{-1}$ has the equiangular rows with the norm $\sqrt{\beta}$ and the cosine of the angle $\alpha^\prime<0$. Then analogous to \eqref{rowe} there exists $\tilde{S}\in \text{EM}^n_{\alpha^\prime}$ so that $S^{-1}=\sqrt{\beta}\tilde{S}^T$. Therefore $\tilde{S}^T\text{diag}(\lambda_1,\ldots,\lambda_k,\lambda,\cdots,\lambda)\tilde{S}=\frac{1}{\beta}D$. The subtraction of this equality from the equation $\lambda\tilde{S}^T I_n\tilde{S}=\lambda G_{\alpha^\prime}$ is illustrated as follows
\begin{equation}
\begin{bmatrix}(\lambda _1-\lambda)\tilde{s}_{11}&\ldots &(\lambda _{k}-\lambda)\tilde{s}_{k1}\\ \vdots &\ddots &\vdots\\ (\lambda _1-\lambda)\tilde{s}_{1n} &\ldots &(\lambda _{k}-\lambda)\tilde{s}_{kn} \end{bmatrix}\cdot \begin{bmatrix}\tilde{s}_{11}&\ldots &\tilde{s}_{1n}\\ \vdots &\ddots &\vdots\\ \tilde{s}_{k1} &\ldots &\tilde{s}_{kn} \end{bmatrix}= \footnotesize{\begin{bmatrix}\tfrac{d_1}{\beta}-\lambda &-\lambda\alpha ^\prime &\cdots &-\lambda\alpha ^\prime\\-\lambda\alpha ^\prime &\tfrac{d_2}{\beta}-\lambda &~&\vdots\\\vdots &~&\ddots &~\\-\lambda\alpha ^\prime &\cdots &~ &\tfrac{d_n}{\beta}-\lambda  \end{bmatrix}}\cdot\label{STAS}
\end{equation}
The right hand matrix is analogous with $G_n$ in \eqref{detGn1} and its rank must be at most $k$. On the other hand the assumed matrix decomposition is possible for the numbers between zero and $\alpha$. So if $\alpha$ converges to zero, then the diagonal entries of the above matrix tend to $d_i-\lambda$ and all off-diagonals tend to zero. One can consider a subsequence of the $\alpha$'s close to zero in such a way that for each $i$, $\frac{d_i}{\beta}-\lambda\neq 0$ and also $\vert (n-1)\alpha^\prime\lambda\vert<\vert \frac{d_i}{\beta}-\lambda\vert$. Then there exists a $0<\alpha<1$ for which 
\begin{equation}
0<\vert (n-1)\alpha^\prime\lambda\vert<\vert \frac{d_i}{\beta}-\lambda\vert\cdot
\end{equation}
Now by the Gerschgorin Theorem \cite{Horn,Meyer} the eigenvalues of the mentioned matrix are nonzero. So its rank will be $n>k$ which is a contradiction.
\end{proof}
Now the outcome of mentioned counter example can be represented as a theorem in related to the general form of the polynomials with nonreal roots.
\begin{theorem}
The real scalers $\lambda_1,\lambda_2,\ldots,\lambda_n$ and $0<\alpha<1$ $(n>3)$ are given so that there are two cases for $\lambda_i$'s: either all them are equal or the $i$ of them are equal $(2\leq i\leq n-2)$. Then the following polynomial has at least two nonreal roots. $(n\geq 2)$
\begin{equation}
f_{n}(x)=x^n-(\sum _{i=1}^n\lambda _i)x^{n-1}+\dfrac{\sum _{1\leq i<j\leq n}\lambda _i\lambda _j}{(1-\alpha)(1+\alpha)}x^{n-2}+\cdots +(-1)^n\dfrac{\lambda _1\ldots\lambda _n}{(1-\alpha)^ {n-1}(1+(n-1)\alpha)}\cdot \label{fnx}
\end{equation}
Moreover, all real polynomials of degree $n$ which has the nonreal roots, can be illustrated as the form of \eqref{fnx}. 
\end{theorem}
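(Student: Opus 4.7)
Plan. I would split the statement into its two claims and handle each separately, leaning on the two impossibility results that immediately precede it.

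For the first claim, I would observe that $f_n(x)$ is exactly the polynomial $g(x)$ constructed in \eqref{g_n}, obtained when one attempts to decompose the diagonal matrix $A=\mathrm{diag}(\lambda_1,\ldots,\lambda_n)$ as $SDS^T$ with $S\in\text{EM}^n_\alpha$ and a real diagonal $D$; the roots of $f_n$ are precisely the diagonal entries $d_i$ that such a $D$ would carry. If all $\lambda_i$ coincide with a common value $r$, then $A=rI$ and Proposition \ref{root} applies directly, so $f_n$ has at least two non-real roots. If instead $n-k$ of the $\lambda_i$ coincide for some $2\le k\le n-2$, then Lemma \ref{naghz} rules out any real diagonal $D$ realising $A=SDS^T$, so not all roots of $f_n$ can be real and a conjugate pair must appear. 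In both regimes the first assertion follows.

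For the ``moreover'' part, given a real polynomial $p(x)=x^n-c_1x^{n-1}+c_2x^{n-2}-\cdots+(-1)^nc_n$ with at least two non-real roots, I would match coefficients against \eqref{fnx} using \eqref{d_i}:
\[
e_k(\lambda_1,\ldots,\lambda_n)=(1-\alpha)^{k-1}\bigl(1+(k-1)\alpha\bigr)\,c_k,\qquad k=1,\ldots,n,
\]
so that the $\lambda_i$ are forced to be the roots of an $\alpha$-dependent polynomial $\tilde p_\alpha$. At $\alpha=0$ one has $\tilde p_0=p$, whose roots are non-real by hypothesis; as $\alpha\to 1^-$ every factor $(1-\alpha)^{k-1}$ with $k\ge 2$ vanishes and $\tilde p_\alpha$ degenerates to $x^{n-1}(x-c_1)$, a polynomial with only real roots. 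Varying $\alpha$ continuously across $(0,1)$ and using that non-real roots of a real polynomial occur in conjugate pairs, I would extract an $\alpha$ at which $\tilde p_\alpha$ becomes real-rooted and read off the required $\lambda_i$ as its roots.

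The main obstacle is precisely this continuity step. The limit polynomial $x^{n-1}(x-c_1)$ has $0$ as a real root of multiplicity $n-1$, and a generic perturbation of such a cluster can split into several non-real conjugate pairs rather than into real roots, so the naive ``slide $\alpha$ from $0$ to $1$'' intuition is not by itself sufficient. Making the deformation rigorous would require either a discriminant analysis of $\tilde p_\alpha$ as $\alpha\to 1^-$, or a more global topological/degree argument on the locus of real-rooted polynomials inside the one-parameter family $\{\tilde p_\alpha\}_{\alpha\in(0,1)}$; this is the step that will demand the most care when filling in the sketch.
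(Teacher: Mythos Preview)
Your handling of the first claim is exactly the paper's: Proposition~\ref{root} covers the case where all $\lambda_i$ coincide, and Lemma~\ref{naghz} covers the case where between $2$ and $n-2$ of them coincide.

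For the ``moreover'' clause the paper proceeds differently. It does not attempt to manufacture $\lambda_i$ and $\alpha$ from a given polynomial by a deformation in $\alpha$. Instead it argues by contrapositive, citing Theorem~\ref{SDST} and Lemma~\ref{LeiS}: when the $\lambda_i$ are all distinct, or when exactly $n-1$ of them agree, those results guarantee an $\alpha$ for which the resulting $d_i$ (the roots of $f_n$) are all real; hence, if $f_n$ has non-real roots, the $\lambda_i$ must fall into one of the two configurations named in the hypothesis. This sidesteps entirely the discriminant/degree obstacle you isolate in your continuity argument. On the other hand, read literally the paper's argument shows only that \emph{if} a polynomial with non-real roots is already written in the form~\eqref{fnx}, then the accompanying $\lambda_i$ have the stated shape; it does not supply the existence of such a representation for an arbitrary real polynomial with non-real roots, and the existential quantifier over $\alpha$ in Theorem~\ref{SDST} (``some $\alpha$ near~$0$'') does not match the universally fixed $\alpha$ in the present statement. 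Your constructive route, if the continuity step near $\alpha\to 1^-$ could be made rigorous, would actually deliver the stronger existence assertion; the paper's contrapositive is much shorter but leaves that same existence question open.
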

\begin{proof}
From the Lemmas \ref{root} and \ref{naghz} the first part is proven. For the next part as regarding to the Lemma \ref{LeiS} and the Theorem \ref{SDST} when the $\lambda_i$'s are distinct or $n-1$ of them are equal, then for a $\alpha$ all $d_i$'s are real. So we can say that if the scalers $d_i$ with at least two of them are complex which are the roots of $f_n(x)$, then the $\lambda_i$'s must be as mentioned in the hypothesize of the theorem.
\end{proof}
\section{Doubly equiangular matrices}
In this section we study the special type of equiangular matrices which have equiangular rows, in addition to the equiangular columns. The matrix $\bar{S}_\alpha$ in section \ref{inveig} is of this type. But we want to find the general form of them.
\begin{definition}
The matrix $\bar{S}$ is called ``doubly equiangular" if for $0\neq \alpha\in(\frac{-1}{n-1},1)$, $\bar{S},\bar{S}^T \in \text{EM}^n_\alpha$. Also $\text{DEM}^n_\alpha$ denotes the set of all doubly equiangular matrices with the cosine of the angle between its row or column vectors $\alpha$. \label{ddoub}
\end{definition} 
Note that a doubly equiangular matrix $\bar{S}$ is normal because $\bar{S}\bar{S}^T=\bar{S}^T\bar{S}$. So $\bar{S}$ is orthogonally diagonalizable. From Schur form $\bar{S}=Q\Lambda Q^T$ that $\Lambda$ is a blocked diagonal matrix with $1\times 1$ and $2\times 2$ blocks. We write $\bar{S}^T\bar{S}=G_\alpha =Q\Lambda^T\Lambda Q^T$, where $\Lambda_\alpha=\Lambda^T\Lambda$ is a diagonal matrix with some eigenvalues corresponding to the $1\times 1$ blocks of $\Lambda$. As regards to the Lemma \ref{Leig}, $\Lambda_\alpha$ has an eigenvalue $1+(n-1)\alpha$ in a $1\times 1$ block with the corresponding eigenvector $e$. So we can say its corresponding eigenpair in $\bar{S}$ is $(\sqrt{1+(n-1)\alpha},e)$. Since $e$ is an eigenvector of $\bar{S}$, then the row sum of $\bar{S}$ is the same. The same reasoning is true for $\bar{S}^T$. So $e$ is also an eigenvector of $\bar{S}^T$ which implies that the column sum of $\bar{S}$ is the same. Then $\bar{S}e=\bar{S}^Te=(1+(n-1)\alpha)^ {1/2}e$. For the case of $\alpha=0$ we present a special definition similar to the before.
\begin{definition}
The orthogonal matrix $\bar{Q}$ is called ``doubly orthogonal" if $e$ be the eigenvector of $\bar{Q}$. Also $\text{DOM}^n$ denotes the set of all doubly orthogonal matrice.
\end{definition}
In this case the condition of being normal a matrix $Q$ is not sufficient for $e$ to be its eigenvector. Because $G_\alpha=I$ so any vector is in the eigenspace of eigenvalue $1$ so that the eigenvectors of $I$ and accordingly $Q$ are not restricted to $e$. Although for any orthogonal matrix $Q$, $Q^TQ=QQ^T$, but it may not be doubly orthogonal necessarily based on definition, unless the vector $e$ be its eigenvector.\\
Now we want to obtain a doubly equiangular (doubly orthogonal) matrix like $\bar{S}$ $(\bar{Q})$ by means of an equiangular (orthogonal) matrix $S$ $(Q)$ so that its column sum vector be in direction to $e$. $(\bar{S}e=\lambda e$ or $\bar{Q}e=\lambda e)$.
\begin{theorem}
If $S\in \text{EM}^n_\alpha$ with $\alpha\in(\frac{-1}{n-1},1)$. Then one can obtain a doubly equiangular (orthogonal) matrix like $\bar{S}\in \text{DEM}^n_\alpha ~(\text{DOM}^n)$ from $S$ as
\begin{equation}
\bar{S}=(I-\frac{2uu^T}{\Vert u\Vert ^2})S\label{dequ}
\end{equation}
where $u=(S-\sqrt{1+(n-1)\alpha}I)e$. \label{dthe}
\end{theorem}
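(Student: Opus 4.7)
The plan is to exploit the fact that $H := I - \tfrac{2uu^T}{\Vert u\Vert^{2}}$ is a Householder reflection, hence an orthogonal involution. First, the orthogonality of $H$ immediately preserves the column-equiangular structure: $\bar S^T\bar S = S^T H^T H S = S^T S = G_\alpha$, so $\bar S\in\text{EM}^n_\alpha$ with no extra work. Next I would verify the norm condition needed for the Householder identity. Using $G_\alpha e = (1+(n-1)\alpha)e$ from Lemma \ref{Leig}, one gets $\Vert Se\Vert^{2} = e^T G_\alpha e = n(1+(n-1)\alpha)$, which coincides with $\Vert\sqrt{1+(n-1)\alpha}\,e\Vert^{2}$. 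Since $u = Se - \sqrt{1+(n-1)\alpha}\,e$ is the difference of two vectors of equal length, the standard Householder identity yields $\bar S e = H(Se) = \sqrt{1+(n-1)\alpha}\,e$; the degenerate case $u=0$ means $Se$ already points along $e$, and one just sets $H := I$.

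The main step is to upgrade the single eigenvector condition $\bar S e = \lambda e$ (with $\lambda = \sqrt{1+(n-1)\alpha}$), together with $\bar S^T\bar S = G_\alpha$, to the row-equiangular property $\bar S\bar S^T = G_\alpha$. For this I would use the polar decomposition $\bar S = Q\bar S_\alpha$ recalled in Section \ref{inveig}, where $\bar S_\alpha = G_\alpha^{1/2}$ is symmetric positive definite and $Q$ is orthogonal. Since $\bar S_\alpha$ shares the eigenpair $(\lambda,e)$ with $G_\alpha$, the identity $\bar S e = \lambda e$ forces $\lambda Qe = \lambda e$, hence $Qe = e$. Writing $G_\alpha = (1-\alpha)I + \alpha ee^T$ and using $Qe = e$ then gives
\begin{equation*}
\bar S\bar S^T = Q\bar S_\alpha^{2}Q^T = QG_\alpha Q^T = (1-\alpha)I + \alpha(Qe)(Qe)^T = G_\alpha,
\end{equation*}
so $\bar S^T\in\text{EM}^n_\alpha$ and $\bar S\in\text{DEM}^n_\alpha$ by Definition \ref{ddoub}. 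The orthogonal case $\alpha=0$ specialises with $G_0 = I$: the matrix $\bar S$ is orthogonal and satisfies $\bar S e = e$, which is precisely the requirement for $\text{DOM}^n$.

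The hardest part is conceptual rather than computational. One must recognise that the Householder formula for $u$ is designed precisely to rotate $Se$ onto $\sqrt{1+(n-1)\alpha}\,e$, and that this single eigenvector condition, coupled with the polar decomposition, is enough to pin down the row Gram matrix as $G_\alpha$. The computation that converts $Qe=e$ into $QG_\alpha Q^T = G_\alpha$ relies crucially on the rank-one-plus-scalar structure $G_\alpha = (1-\alpha)I + \alpha ee^T$; this is what makes the argument go through without any further structural hypothesis on $Q$. Everything else is routine bookkeeping once these two observations are in place.
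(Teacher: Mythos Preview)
Your argument is correct and follows the same strategy as the paper: build a Householder reflection that sends the column-sum vector $Se$ onto $\sqrt{1+(n-1)\alpha}\,e$, so that $\bar S$ remains column-equiangular (since $H$ is orthogonal) and acquires $e$ as an eigenvector. The paper's proof stops at this point and simply asserts that ``it can be checked that $\bar S\bar S^{T}=G_\alpha$''; you actually carry out this verification, and by a route the paper does not spell out. Your use of the polar decomposition $\bar S=Q\bar S_\alpha$ to convert $\bar S e=\lambda e$ into $Qe=e$, and then the rank-one structure $G_\alpha=(1-\alpha)I+\alpha ee^{T}$ to conclude $QG_\alpha Q^{T}=G_\alpha$, is a clean and transparent way to close the gap the paper leaves open. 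So the overall approach matches, but your proof is more complete on the one nontrivial point.
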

\begin{proof}
It suffics to construct a Householder transformation like $H$ so that transform the column sum vector of $S$ to a vector in direction to $e$. The subtraction of normalized column sum vectors of $S$ and $\bar{S}$ is $\frac{1}{\sqrt{n(1+(n-1)\alpha)}}Se-\frac{1}{\sqrt{n}}e$ which must be in direction to $u$. Then $\bar{S}$ is obtained by multiplication of the Householder matrix $I-\frac{2uu^T}{\Vert u\Vert ^2}$ to $S$ from the left. It can be checked that $\bar{S}\bar{S}^T=G_\alpha$ and $\bar{S}e=\bar{S}^Te=(1+(n-1)\alpha)^ {1/2}e$.
\end{proof}
As noted in the Example \ref{psquare}, the sufficient condition for producing a doubly equiangular matrix with nonnegative entries is that $\frac{n-2}{n-1}\leq\alpha<1$ and the lower bound of $\alpha$ is hold for $\sqrt{G_\alpha}$ with $\tfrac{1}{\sqrt{n}}<t\leq\tfrac{1}{\sqrt{n-1}}$. Actually $\sqrt{G_\alpha}$ in this case is a symmetric matrix, but not the principal root of $G_\alpha$. The general form of $\sqrt{G_\alpha}$ is illustrated as follows
\begin{equation}
\sqrt{G_\alpha}=\left[\begin{array}{cccc}0&\frac{1}{\sqrt{n-1}}&\ldots &\frac{1}{\sqrt{n-1}}\\\frac{1}{\sqrt{n-1}}&0&\ldots &\frac{1}{\sqrt{n-1}}\\\vdots &\vdots &\ddots &\vdots\\\frac{1}{\sqrt{n-1}}&\frac{1}{\sqrt{n-1}}&\ldots &0\end{array}\right]~,\qquad\alpha =\frac{n-2}{n-1}\cdot
\end{equation}
As regard to the matrix $\bar{S}_\alpha=G_\alpha^{1/2}$ which has positive entries, we deduce that the mentioned condition is not necessary. Actually matrix $(1+(n-1)\alpha)^ {-1/2}\bar{S}$ where $\bar{S}\in \text{DEM}^n_\alpha$ with the mentioned condition is a doubly stochastic matrix and without it, is a quasi doubly stochastic matrix.\\
One can provide an algorithm named DEA followed by the Theorem \ref{dthe} to produce a matrix $S\in \text{DEM}^n_\alpha~(Q\in \text{DOM}^n)$ from the decomposition SR (QR) of a nonsingular matrix $A$.
\begin{algorithm} 
\caption{.~This algorithm produces a doubly equiangular (orthogonal) matrix $\bar{S}~(\bar{Q})$ with $\alpha\in(\frac{-1}{n-1},1)$ from a nonsingular matrix $A$ of size $n$.}
\label{algo:dequ}
\begin{algorithmic}[1]
\STATE $A=SR~;$~\% SR (QR) decomposition with $\alpha\in(\frac{-1}{n-1},1)$
\STATE $u=(S-\sqrt{1+(n-1)\alpha}~I)e~;$
\STATE $\bar{S}=(I-\frac{2uu^T}{\Vert u\Vert ^2})S~;$
\end{algorithmic}
\end{algorithm}
\begin{example}
If $A=\left[\footnotesize{\begin{array}{cccc}1&1/2 & 1/3 &1/4\\1/2 & 1/3 &1/4&1/5\\1/3 &1/4&1/5&1/6\\1/4&1/5&1/6&1/7 \end{array}}\right]$, then by applying DEA on $A$, the rounded $\bar{S}$ with $\alpha=2/3$ is obtained as follows
\begin{equation}
\bar{S}=\left[\footnotesize{\begin{array}{cccc}0.8517&0.3048& 0.3774&0.1981\\ 0.3976&0.3942&0.1205&0.8198\\ 0.2399&0.1863&0.8189&0.4869\\0.2429 &0.8468&0.4152&0.2273 \end{array}}\right]~,
\end{equation}
where $\bar{S}\bar{S}^T=\bar{S}^T\bar{S}= G_{2/3}$ and $\bar{S}e=\bar{S}^T e=\sqrt{3}e$.
\end{example}
\begin{example}
The orthogonal matrix $Q=\left[\footnotesize{\begin{array}{ccc}~~3/7&-2/7&~~6/7\\~~6/7&~~3/7 &-2/7\\-2/7&~~6/7&~~3/7\end{array}} \right]$ in Example \ref{Dorth} is a cyclic doubly orthogonal matrix so that $Qe=Q^Te=e$.
\end{example}
\begin{example}
The matrix $Q=\left[\footnotesize{\begin{array}{cccc}-1/2&~1/2&~1/2&~1/2\\-1/2&~1/2 &-1/2&-1/2\\-1/2&-1/2&~1/2 &-1/2\\-1/2&-1/2&-1/2&~1/2 \end{array}} \right]$ is an orthogonal matrix. From DEA, $Q$ is transformed to the following doubly orthogonal matrix
\begin{equation}
\bar{Q}=\left[\footnotesize{\begin{array}{cccc}-1/2&1/2&~1/2&~1/2\\~1/2&~5/6& -1/6&-1/6\\~1/2&-1/6&~5/6 &-1/6\\~1/2&-1/6&-1/6&~5/6 \end{array}} \right],
\end{equation}
so that $\bar{Q}e=\bar{Q}^Te=e$.
\end{example}
From the later discussions and examples one can conclude that doubly orthogonal matrices are quasi doubly stochastic matrices.
\begin{lemma}
The matrix $S_\alpha\in \text{EM}^n_\alpha$ is commutable with family of all $\bar{S}\in \text{DEM}^n_{\alpha^\prime}$, where $\alpha,\alpha^\prime\in(\frac{-1}{n-1},1)$. The matrix $G_\alpha$ has also the same property. \label{comm}
\end{lemma}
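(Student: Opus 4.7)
The plan is to reduce the assertion to a single identity, namely $G_\alpha\bar{S}=\bar{S}G_\alpha$ for every $\bar{S}\in\text{DEM}^n_{\alpha'}$, and then transfer that identity to $\bar{S}_\alpha=G_\alpha^{1/2}$ via the polynomial functional calculus. (I read the matrix $S_\alpha$ of the statement as the canonical $\bar{S}_\alpha=G_\alpha^{1/2}$, since an arbitrary $S\in\text{EM}^n_\alpha$ plainly need not commute with every element of $\text{DEM}^n_{\alpha'}$: for $n=2$, $\alpha=0$ one can exhibit an orthogonal $S$ and a permutation $\bar{S}$ that fail to commute.) The first of these two steps is a direct matrix computation, the second is a standard consequence of the two-point spectrum of $G_\alpha$.

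For the commutativity with $G_\alpha$ I would begin from the explicit decomposition $G_\alpha=(1-\alpha)I+\alpha ee^T$ recorded just after \eqref{I}. The paragraph following Definition \ref{ddoub} has already shown that any $\bar{S}\in\text{DEM}^n_{\alpha'}$ satisfies $\bar{S}e=\bar{S}^Te=\lambda'e$ with $\lambda'=\sqrt{1+(n-1)\alpha'}$, so $e^T\bar{S}=\lambda'e^T$ as well. Substituting into both products yields
\begin{align*}
G_\alpha\bar{S} &= (1-\alpha)\bar{S}+\alpha\, e(e^T\bar{S}) = (1-\alpha)\bar{S}+\alpha\lambda'\, ee^T,\\
\bar{S}G_\alpha &= (1-\alpha)\bar{S}+\alpha\,(\bar{S}e)e^T = (1-\alpha)\bar{S}+\alpha\lambda'\, ee^T,
\end{align*}
so the two sides coincide. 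This already settles the second sentence of the lemma: $G_\alpha$ commutes with the whole family $\text{DEM}^n_{\alpha'}$ for every admissible $\alpha'$.

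For the first sentence I would invoke the spectral fact that the principal square root of a positive definite matrix lies in the polynomial algebra generated by that matrix. By Lemma \ref{Leig} the Gram matrix $G_\alpha$ has only two distinct eigenvalues, $1-\alpha$ and $1+(n-1)\alpha$, so Lagrange interpolation of $t\mapsto\sqrt{t}$ on these two values produces a polynomial $p$ of degree at most one with $\bar{S}_\alpha=G_\alpha^{1/2}=p(G_\alpha)$. Any matrix that commutes with $G_\alpha$ therefore commutes with every polynomial in $G_\alpha$, and hence with $\bar{S}_\alpha$. Combined with the previous paragraph this proves the first sentence of the lemma.

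The core calculation is very short, so the only real obstacle is conceptual: the key is to notice that the rank-one-plus-scalar structure of $G_\alpha$ reduces commutativity to the single vector identity $\bar{S}e=\bar{S}^Te=\lambda'e$ that characterises membership in $\text{DEM}^n_{\alpha'}$, and that membership of $\bar{S}_\alpha$ in the polynomial algebra generated by $G_\alpha$ then propagates the commutativity automatically.
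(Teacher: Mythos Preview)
Your proof is correct and follows essentially the same approach as the paper: both arguments exploit the scalar-plus-rank-one structure together with the eigenvector identity $\bar{S}e=\bar{S}^Te=\lambda'e$ to obtain the commutativity directly. The only cosmetic difference is the order---the paper writes $\bar{S}_\alpha=(s-t)I+tee^T$ explicitly, verifies $\bar{S}_\alpha\bar{S}=\bar{S}\bar{S}_\alpha$ first, and then deduces the $G_\alpha$ case from $G_\alpha=\bar{S}_\alpha^{\,2}$, whereas you handle $G_\alpha$ first and pass to $\bar{S}_\alpha=p(G_\alpha)$ via the polynomial calculus.
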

\begin{proof}
It suffics to let $S_\alpha=(s-t)I+tee^T$, where $s,t$ are defined in \eqref{s,t2}. So we can write
\begin{equation}
S_\alpha\bar{S}=((s-t)I+tee^T)\bar{S}=(s-t)\bar{S}+t(1+(n-1)\alpha)^{1/2}ee^T= \bar{S}S_\alpha.
\end{equation}
Also $G_\alpha\bar{S}=(S_\alpha)^2\bar{S}=S _\alpha\bar{S}S_\alpha= \bar{S}(S_\alpha)^2=\bar{S}G_\alpha$.
\end{proof}
\begin{proposition}
Let $\bar{S}_1\in \text{DEM}^n_{\alpha_1}$ and $\bar{S}_2\in \text{DEM}^n_{\alpha_2}$ with $\alpha_1,\alpha_2\in(\frac{-1}{n-1},1)$. Then $\bar{S}_1\bar{S}_2$ is a factor of a doubly equiangular matrix and so is normal.
\end{proposition}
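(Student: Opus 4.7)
The plan is to set $P = \bar{S}_1\bar{S}_2$ and show directly that $PP^T = P^TP$ is a scalar multiple of some $G_\alpha$; normality and the ``factor'' assertion will then fall out at once.

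First I invoke Lemma \ref{comm}: since $\bar{S}_1, \bar{S}_2$ are doubly equiangular, each $G_{\alpha_i}$ commutes with the other factor. Hence
\begin{equation*}
PP^T = \bar{S}_1(\bar{S}_2\bar{S}_2^T)\bar{S}_1^T = \bar{S}_1 G_{\alpha_2}\bar{S}_1^T = G_{\alpha_2}\bar{S}_1\bar{S}_1^T = G_{\alpha_2}G_{\alpha_1},
\end{equation*}
and symmetrically $P^TP = G_{\alpha_1}G_{\alpha_2}$. Any two matrices of the form $aI + bee^T$ commute, so $PP^T = P^TP$, which gives normality of $P$ immediately.

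Next I expand $G_{\alpha_1}G_{\alpha_2}$ using $G_{\alpha_i} = (1-\alpha_i)I + \alpha_i ee^T$ and $e^Te = n$, collecting to match the template $c\,G_\alpha = c(1-\alpha)I + c\alpha\,ee^T$. This produces
\begin{equation*}
c = 1+(n-1)\alpha_1\alpha_2, \qquad \alpha = \frac{\alpha_1 + \alpha_2 + (n-2)\alpha_1\alpha_2}{1+(n-1)\alpha_1\alpha_2}.
\end{equation*}
I then verify admissibility: $\alpha<1$ reduces to $(1-\alpha_1)(1-\alpha_2)>0$, while $\alpha > -1/(n-1)$ reduces to $[1+(n-1)\alpha_1][1+(n-1)\alpha_2]>0$; both hold since each $\alpha_i\in(-1/(n-1),1)$. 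The same range bounds force $\alpha_1\alpha_2 > -1/(n-1)$, hence $c>0$ and $\sqrt{c}$ is real.

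Finally, since $(P/\sqrt{c})^T(P/\sqrt{c}) = (P/\sqrt{c})(P/\sqrt{c})^T = G_\alpha$, both the columns and the rows of $P/\sqrt{c}$ are equiangular with common parameter $\alpha$, i.e.\ $P/\sqrt{c}\in \text{DEM}^n_\alpha$. Therefore $\bar{S}_1\bar{S}_2 = \sqrt{c}\,\bar{S}$ with $\bar{S}$ doubly equiangular, establishing both claims. The only nontrivial step is the algebraic verification that the derived $\alpha$ lies in the admissible interval and that $c$ is positive; everything else is a one-line consequence of Lemma \ref{comm} and the algebra of matrices of the form $aI+bee^T$.
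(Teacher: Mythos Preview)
Your argument is correct and follows the same route as the paper: use Lemma~\ref{comm} to get $PP^T=P^TP=G_{\alpha_1}G_{\alpha_2}=cG_{\alpha'}$ with exactly the same $c$ and $\alpha'$. You in fact go further than the paper by verifying $c>0$ and that $\alpha'$ lands in the admissible interval $(-\tfrac{1}{n-1},1)$, checks the paper's proof omits.
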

\begin{proof}
From Lemma \ref{comm} we can write
\begin{equation}
\bar{S}_1\bar{S}_2(\bar{S}_1\bar{S}_2)^T =\bar{S}_1\bar{S}_2 \bar{S}_2^T\bar{S}_1^T=\bar{S}_1 G_{\alpha_2}\bar{S}_1^T=G_{\alpha_2} \bar{S}_1\bar{S}_1^T= G_{\alpha_2}G_{\alpha_1}=cG_{\alpha ^\prime},
\end{equation}
where $c=1+(n-1)\alpha_1 \alpha _2$ and $\alpha^\prime=(\alpha_1 +\alpha_2 +(n-2)\alpha_1 \alpha_2)/c$. Proving $(\bar{S}_1\bar{S}_2)^T\bar{S}_1\bar{S} _2=cG_{\alpha^\prime}$ is similar the last one.
\end{proof}
In general, DEMs (DOMs) are not commutable. Actually being normal the multiplication of two matrices of DEMs (DOMs), is necessary condition for commuting but not sufficient.\\
In the category of normal matrices, there are symmetric (hermitian), skew-symmetric (skew-hermitian) and orthogonal (unitary) matrices. Now we can add the new type of normal matrices ``doubly equiangular (orthogonal) matrices" to that category. However there exist normal matrices that are not included any of these categories.
\section{Equiangular vectors as a sequence of Equiangular frames} \label{equiframe}
In this section another aspect of equiangular vectors is considered, which is the possibility of assuming them as a equiangular frame (EF). The theory of frames plays a fundamental role in the signal processing, image processing, data compression and more which defined by Duffin and Schaeffer \cite{Duffin}. 
\begin{definition}
A sequence $f_n~(n\in\mathbb{Z})$ of elements in a Hilbert space $H$ is called a frame if there are constants $c_1,c_2>0$ so that
\begin{equation}
c_1\Vert f\Vert ^2\leq \sum _{n\in\mathbb Z}\vert\langle f,f_n \rangle\vert^ 2\leq c_2\Vert f\Vert ^2,\quad for~all~f\in H.
\end{equation} \label{Deff}
\end{definition}
The scalars $c_1$ and $c_2$ are called lower and upper frame bounds, respectively. The largest $c_1>0$ and the smallest $c_2>0$ satisfying the frame inequalities for all $f\in H$ are called the optimal frame bounds. The frame is a tight frame if $c_1=c_2$ and a normalized tight frame or Parseval frame if $c_1=c_2=1$. A frame is called overcomplete in the sense that at least one vector can be removed from the frame and the remaining set of vectors will still form a frame for $H$ (but perhaps with different frame bounds). Equiangular tight frames (ETFs) potentially have many more practical and theoretical applications \cite{Bodmann,Holmes}. An equiangular tight frame is a set of vectors $\{f_i\}_{i=1}^m$ in $\mathbb{R}^n$ (or $\mathbb{C}^n$) that satisfies the following conditions \cite{Strohmer}.
\begin{flushleft}
1.$~\Vert f_i\Vert _2=1 ,\qquad\qquad\qquad i=1,\ldots ,m.$\\
2.$~\vert\langle f_i,f_j\rangle\vert =\alpha ,\qquad\qquad ~~\forall ~i\neq j~\text{and a constant}~\alpha .$\\
3.$(n/m)\sum _{i=1}^m \langle f,f_i\rangle f_i=f,\quad ~~\forall ~\text{f}\in \mathbb{R}^n (\text{or}~\mathbb{C}^n).$
\end{flushleft}
Taking inner product of the equality in third condition with $f$ indicates that the mentioned vectors form a tight frame. We set $S_n=[f_1,\ldots ,f_m]$. It can be checked that $(n/m)S_n {S_n}^T f=f$ so $S_n{S_n}^T =(m/n)I_n$ which is equivalent to the condition 3:
\begin{flushleft}
$3^{'}$.~$S_n{S_n^T} =(m/n)I_n$,\qquad where $S_n=[f_1,\ldots ,f_m]$.
\end{flushleft}
In the next lemma we show that matrix $S_n$ with $m=n+1$ can be illustrated so that has a column vector $e_1$. Then by deletion this vector a $n\times n$ equiangular matrix $S$ is obtained so that $SS^T= \text{diag}(\frac{1}{n},\frac{n+1}{n},\ldots,\frac{n+1}{n})$. As discussed in \eqref{rowe} and the Lemma \ref{LeiS}, $SS^T$ forms a diagonal matrix with eigenvalues $\{1-\alpha,1+(n-1)\alpha\}$ where $\alpha=-\frac{1}{n}$.

The conditions $2,3$ together imply
\begin{equation}
\alpha =\sqrt{\frac{m-n}{n(m-1)}},
\end{equation}
which is the smallest possible $\alpha$ for a set of equiangular normalized vectors in $\mathbb{R}^n$ (or $\mathbb{C}^n$). Due to the theoretical and numerous practical applications, equiangular tight frames are noticeably the most important class of finite-dimensional frames which have many applications in the signal processing, communications, coding theory, sparse approximation and more. \cite{Health,Strohmer}
\begin{example}
(Orthonormal Bases). When $m=n$, the ETFs form orthogonal matrices.
\end{example}
A set of equiangular lines with the angle $0<\theta<\pi/2$ in $\mathbb{R}^n$ is the set of lines which pass through the origin so that the cosine of the angle between them are $\pm\cos\theta =\pm\alpha$. Gerzon \cite{Lemmens} (Theorem 3.5) proved that the number of equiangular lines in $\mathbb{R}^n$ cannot be more than $\binom{n+1}{2}$. Similar proof implies that this maximum number of equiangular lines cannot be more than $n^2$ in the $n$-dimensional complex space $\mathbb{C}^n$. The following lemma from \cite{Casazza} specifies the maximum number of EVs in $\mathbb{R}^n$. 
\begin{lemma}
The maximum number of equiangular lines in $\mathbb{R}^n$ $n\geq 2$ is $n+1$, with the cosine of the angle between them is $-\frac{1}{n}$. \label{Lmax}
\end{lemma}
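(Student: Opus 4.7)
The plan is to analyse the Gram matrix of any hypothetical collection of $m$ equiangular unit vectors $s_1,\dots,s_m\in\mathbb{R}^n$ sharing a common inner product $\alpha$. Exactly as in~\eqref{I}, the Gram matrix is $G=(1-\alpha)I_m+\alpha ee^T$, whose spectrum, by Lemma~\ref{Leig}, consists of $1+(m-1)\alpha$ (simple) and $1-\alpha$ (with multiplicity $m-1$). Since the vectors live in $\mathbb{R}^n$, one has $\mathrm{rank}(G)\leq n$, and this single inequality will drive the whole argument.

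First I would dispose of the degenerate case $\alpha=1$, in which all the $s_i$ coincide and only one line is produced. Then, ruling that out, I would split the remaining analysis in two. If $1+(m-1)\alpha\neq 0$, then $G$ is nonsingular and $m=\mathrm{rank}(G)\leq n$. If instead $1+(m-1)\alpha=0$, then $\alpha=-1/(m-1)$, the rank of $G$ drops by exactly one, and $m-1\leq n$, i.e., $m\leq n+1$. Combining both branches yields $m\leq n+1$, and the equality $m=n+1$ is possible only in the second branch, which pins $\alpha=-1/n$.

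To show that $n+1$ equiangular lines with cosine $-1/n$ really are attained, I would exhibit them as a regular simplex. In $\mathbb{R}^{n+1}$ set $v_i=e_i-\tfrac{1}{n+1}e$ for $i=1,\dots,n+1$; each $v_i$ lies in the hyperplane $\{x:e^Tx=0\}$, which is isometric to $\mathbb{R}^n$, and a direct computation gives $\Vert v_i\Vert^2=n/(n+1)$ and $v_i^T v_j=-1/(n+1)$ for $i\neq j$, so that the normalized vectors $s_i=v_i/\Vert v_i\Vert$ satisfy $s_i^T s_j=-1/n$. This produces the desired extremal configuration.

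The main obstacle, such as it is, is simply reconciling the extremal case with the paper's standing convention $\theta\in(0,\arccos(-1/(m-1)))$, which \emph{excludes} the endpoint $\alpha=-1/(m-1)$. I would handle this by observing that the extremal configuration lives precisely on the boundary of the admissible range: the Gram matrix is then only positive \emph{semi}-definite, so $s_1,\dots,s_{n+1}$ are necessarily linearly dependent, and this is exactly why they fit inside $\mathbb{R}^n$ rather than $\mathbb{R}^{n+1}$. Apart from this bookkeeping remark, the eigenvalue description of $G_\alpha$ already carries the entire proof, so no substantial technical difficulty is expected.
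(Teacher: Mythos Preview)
Your argument is correct and complete: the Gram-matrix rank bound together with the spectrum from Lemma~\ref{Leig} immediately forces $m\leq n+1$, with equality only when $\alpha=-1/n$, and your simplex construction realises the extremal configuration. This is a genuinely different route from the paper's proof.

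The paper proceeds by induction on $n$. It explicitly writes down the matrices $S_2$ and $S_3$, then argues that if $n+2$ equiangular vectors existed in $\mathbb{R}^n$, one could rotate the first vector onto $e_1$; the remaining $n+1$ vectors then all have first coordinate equal to $\alpha$, and their projections onto the last $n-1$ coordinates would form $n+1$ equiangular vectors in $\mathbb{R}^{n-1}$, contradicting the inductive hypothesis. The existence part is likewise inductive: $S_n$ is built from $S_{n-1}$ by adjoining $e_1$ and scaling, yielding an explicit upper-triangular description of the extremal configuration.

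Your approach is shorter and more conceptual: the entire upper bound is a one-line consequence of the rank constraint on a positive semidefinite matrix with known spectrum, and it makes transparent \emph{why} $\alpha=-1/n$ is the only value allowing $m=n+1$ (it is exactly where the simple eigenvalue of $G$ vanishes). The paper's approach, by contrast, is more hands-on and constructive: its recursive description of $S_n$ dovetails with the block structure used later in the section and feeds directly into the tight-frame computations that follow. Either argument is sufficient; yours is the standard linear-algebraic proof, while the paper's is tailored to the explicit matrices it wants to work with downstream.
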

\begin{proof}
The proof is carried out by induction. If $n=2$, then the desired equiangular lines are in direction to three vectors with the angle $-\pi/3$. It can represented as the normalized column vectors of a $2\times 3$ matrix as 
\begin{equation}
S_2=\left[\begin{array}{ccc}1&-\frac{1}{2}&-\frac{1}{2}\\0&~~\frac{\sqrt{3}}{2}&-\frac{\sqrt{3}}{2}\end{array}\right];\quad n=2~,~\alpha =-\frac{1}{2}~\cdot
\end{equation}
For $n=3$, the desired equiangular lines are in direction to four vectors in such a way that they make a regular triangular pyramid or tetrahedron when they pass through the origin. So the cosine of the angle between them is $-1/3$. It can be represented as the normalized column vectors of a $3\times 4$ matrix as
\begin{equation}
S_3=\left[\begin{array}{cccc}1&-\frac{1}{3}&-\frac{1}{3}&-\frac{1}{3}\\0&~\frac{2\sqrt{2}}{3}&-\frac{\sqrt{2}}{3}&-\frac{\sqrt{2}}{3}\\0&0&\frac{\sqrt{6}}{3}&-\frac{\sqrt{6}}{3}\end{array}\right];\quad n=3~,~\alpha =-\frac{1}{3}~\cdot
\end{equation}
Clearly the $2\times 3$ submatrix in the lower right corner of $S_3$ is a factor of $S_2$ So its column vectors are equiangular with the angle $-\pi/3$. Now assume that there are five equiangular vectors $s_{1}^{\prime},s_{2}^{\prime} ,\ldots,s_{5}^{\prime}$ in $\mathbb{R}^3$ so that $s_{i}^{\prime T}\cdot s_{j} ^{\prime}=\alpha^{'}$. These vectors can be rotated in such a way that $s^\prime _1$ is in direction to the vector $e_1$ and $s^\prime _2$ is in the plane spanned by $e_1,e_2$. So without loss of generality suppose that $s^\prime _{1},\ldots,s^\prime _{5}$ form a matrix column-wise as follows
\begin{equation}
S^\prime _3=\left[\begin{array}{ccccc}1&\alpha ^\prime & \alpha ^\prime &\alpha ^\prime &\alpha ^\prime \\0&*&*& *&*\\0&0&*& *&*\end{array}\right];\quad n=3~,~\alpha =\alpha ^\prime ,
\end{equation}
in which the column vectors are equiangular. So the $2\times 4$ submatrix in the lower right corner of $S^\prime _3$ has four equiangular column vectors in $\mathbb{R}^2$ which is a contradiction with $S_2$. By the same argument $n+1$ equiangular lines in $\mathbb{R}^n$ using $n$ equiangular lines in $\mathbb{R}^{n-1}$ can be constructed which form a $(n-1)\times n$ matrix as
\begin{equation}
S_{n-1}=\left[\begin{array}{ccccc} 1&-\frac{1}{n-1}&\ldots &-\frac{1}{n-1}&-\frac{1}{n-1}\\0&*&\ldots &*&*\\\vdots &\vdots &\ddots &\vdots &\vdots\\0&0&\ldots &*&*\end{array}\right];\quad\alpha =-\frac{1}{n-1}~\cdot
\end{equation}
Actually $S_n$ which is constructed by $n+1$ equiangular lines, contains a $(n-1)\times n$ submatrix in the lower right corner which is a factor of $S_{n-1}$. Therefore the $n\times (n+1)$ block matrix $S_n$ can be written as follows
\begin{equation}
S_n=\left[\begin{array}{cc}1&~-\frac{1}{n}e^T\\~&~\\0&~\rho S_{n-1} \end{array}\right];\quad \alpha =-\frac{1}{n}~,
\end{equation}
We must find the variable $\rho$ so that the column vectors of $S_n$ are normal. In the second column $[-\frac{1}{n},\rho,0,\ldots ,0]^T$ we have $\rho=\frac{\sqrt{n^2-1}}{n}$. It is notable that the column vectors of $S_n$ make a regular simplex which is a regular polytope. Therefore we see that the maximum number of ELs in $\mathbb{R}^n$ with the cosine of the angle $-\frac{1}{n}$ never exceed $n+1$.
\end{proof}
As noted before matrix $S$ obtained by deletion of the first column of $S_n$ is equiangular so that $S$ can be constructed from the theorem \ref{SDST} with $A=\text{diag}(\frac{1}{n},\frac{n+1}{n},\ldots,\frac{n+1}{n})$ and $\alpha=-\frac{1}{n}$.\\
On the other hand matrix $S_n$ can be extended to the factor of an orthogonal matrix of size $n+1$ by adding the row vector $(\sqrt{2}/2)e^T$ in the bottom. This new matrix can be written as $\sqrt{\tfrac{n+1}{n}}Q_{n}$ (condition $3^{'}$). So $S_n$ can be considered as the normalized projection of columns of $Q_n$ onto the orthogonal complement of $e_n= [0,0,\ldots,1]^T$. It implies that the row sum of $S_n$ is the zero vector, i.e., if $[S_n]_{ij}=s_{ij}\in \mathbb{R}^{n\times (n+1)}$, then $s_{ii}+(\sum _{j=i+1}^{n+1} s_{ij})=0$.
\begin{theorem}
Suppose that $s_1,s_2,\ldots,s_{n+1}$ indicate the equiangular vectors in $\mathbb{R}^n$ with the angle $\arccos(-1/n)$, Then the sequence $(s_i)_{i=1}^{n+1}$ is a tight frame with the lower and upper bounds equal to $(n+1)/n$.
\end{theorem}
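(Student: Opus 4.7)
The plan is to recast the tight-frame assertion as a single matrix identity and then establish that identity by a short spectral argument, relying on Lemma~\ref{Leig}.

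First, I would collect the equiangular vectors column-wise into the $n\times(n+1)$ matrix $S_n=[s_1,\ldots,s_{n+1}]$. For any $f\in\mathbb{R}^n$,
\begin{equation*}
\sum_{i=1}^{n+1}\bigl|\langle f,s_i\rangle\bigr|^{2}
=\sum_{i=1}^{n+1}f^{T}s_is_i^{T}f
=f^{T}\bigl(S_nS_n^{T}\bigr)f,
\end{equation*}
so the Definition~\ref{Deff} inequalities with $c_1=c_2=(n+1)/n$ are equivalent to the single identity $S_nS_n^{T}=\tfrac{n+1}{n}I_n$. This reduction turns the whole theorem into one linear-algebra statement, and the remaining work is to prove that identity.

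Next, I would pass to the Gram matrix $S_n^{T}S_n$, which is $(n+1)\times(n+1)$ with diagonal entries $\|s_i\|^{2}=1$ and off-diagonal entries $s_i^{T}s_j=-1/n$. Thus $S_n^{T}S_n$ is exactly the matrix $G_\alpha$ of \eqref{I} in size $n+1$ with $\alpha=-1/n$. Applying Lemma~\ref{Leig} with those parameters gives eigenvalues $1-\alpha=\tfrac{n+1}{n}$ (multiplicity $n$) and $1+n\alpha=0$ (multiplicity $1$). Since $S_nS_n^{T}$ and $S_n^{T}S_n$ share the same nonzero spectrum with the same multiplicities, the $n\times n$ symmetric matrix $S_nS_n^{T}$ has all $n$ eigenvalues equal to $(n+1)/n$. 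A symmetric matrix whose spectrum is a single repeated value $\lambda$ must equal $\lambda I$, so $S_nS_n^{T}=\tfrac{n+1}{n}I_n$, which is what is needed; both frame bounds are then $(n+1)/n$ and they are attained for every $f$, so they are optimal.

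The only subtle point — and the place where I would be most careful — is the application of Lemma~\ref{Leig}. That lemma is phrased for $S\in\text{EM}^n_\alpha$, whereas here the underlying matrix $S_n$ is rectangular and the corresponding Gram matrix is singular. However, the proof of Lemma~\ref{Leig} only uses the identities $G_\alpha e=(1+(n-1)\alpha)e$ and $G_\alpha x=(1-\alpha)x$ for $x\perp e$, neither of which uses nonsingularity, so the eigenvalue count goes through verbatim for $\alpha=-1/n$ in dimension $n+1$. As a sanity check, one can re-derive the same identity from the construction in Lemma~\ref{Lmax}: appending the row $\tfrac{1}{\sqrt{n}}e^{T}$ to $S_n$ yields $\sqrt{(n+1)/n}\,Q_n$ with $Q_n$ orthogonal, and reading off the top-left $n\times n$ block of $\tfrac{n+1}{n}I_{n+1}$ recovers $S_nS_n^{T}=\tfrac{n+1}{n}I_n$ independently of Lemma~\ref{Leig}.
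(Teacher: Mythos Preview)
Your proof is correct, but it follows a genuinely different route from the paper's. The paper fixes the explicit upper-triangular representative $S_n$ built in Lemma~\ref{Lmax} and verifies \eqref{frame} by an inductive coordinate computation: it expands $\sum_{i=1}^{n+1}\langle x,s_i\rangle^2$ using the block form $S_n=\begin{bmatrix}1&-\tfrac{1}{n}e^T\\0&\rho S_{n-1}\end{bmatrix}$, uses the zero row-sum property to kill the cross terms, and invokes the induction hypothesis on the $\rho S_{n-1}$ block. Your argument instead is coordinate-free and spectral: you reduce the tight-frame claim to $S_nS_n^{T}=\tfrac{n+1}{n}I_n$ and read this off from the eigenvalues of the Gram matrix $S_n^{T}S_n=G_{-1/n}$ via Lemma~\ref{Leig} and the standard fact that $AB$ and $BA$ share nonzero spectrum. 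What your approach buys is brevity and generality---it works for \emph{any} set of $n+1$ unit vectors in $\mathbb{R}^n$ with mutual inner product $-1/n$, not just the specific $S_n$ constructed in Lemma~\ref{Lmax}---while the paper's computation has the virtue of being entirely self-contained and of exercising the recursive structure of $S_n$ directly. Your caveat about Lemma~\ref{Leig} is well taken and correctly resolved: the eigenvalue calculation there depends only on the algebraic identity $G_\alpha=(1-\alpha)I+\alpha ee^T$, not on invertibility.
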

\begin{proof}
From definition of ETF the result is obvious $(m=n+1)$. However we can prove it using Definition \ref{Deff}. So we show that
\begin{equation}
\frac{n+1}{n}\Vert x\Vert ^2=\sum _{i=1}^{n+1} (x^Ts_i)^2~,\quad \text{for all}~x\in\mathbb{R}^n.\label{frame}
\end{equation}
Without loss of generality let $x=[x_1,\ldots ,x_n]^{T}\in\mathbb{R}^n$ with $\Vert x\Vert =1$. For convenience suppose that $s_1,\ldots ,s_{n+1}$ are the column vectors of the corresponding matrix $S_n$ as noted in the Lemma \ref{Lmax}. The prove is by induction. For $n=1,2$ the result is trivial: $\sum_{i=1}^3\vert \langle x,s_i\rangle\vert^2=x_{1}^2+ (-\frac{1}{2}x_1+\frac{\sqrt{3}}{2}x_2)^2+(-\frac{1}{2}x_1-\frac {\sqrt{3}}{2}x_2)^2=\frac{3}{2}(x_1^2+x_2^2)=\frac{3}{2}$. For general $n$
\begin{align}
\sum _{i=1}^{n+1}\vert\langle x,s_i\rangle \vert^ 2&=x_1^2+(-\frac{1}{n}x_1+\rho s_{22}x_2)^2+ (-\frac{1}{n}x_1+\rho s_{23}x_2+ \rho s_{33}x_3)^2+\ldots\nonumber\\&+ (-\frac{1}{n}x_1+\rho s_{2n}x_2 +\ldots +\rho s_{nn}x_n)^2+ (-\frac{1}{n}x_1+\rho s_{2(n+1)} x_2+\ldots +\rho s_{n(n+1)}x_n)^2\nonumber\\&=(1+\frac{n}{n^2})x_1^2-\frac{2}{n} \rho x_1\big[(s_{22}+s_ {23}+\ldots +s_{2n}+s_{2(n +1)})x_2+(s_{33}+\ldots +s _{3(n+1)})x_3\nonumber\\&+\ldots +(s_{nn} +s_{n(n+1)})x_n\big]+\rho ^2\big[ (s_{22}x_2)^2+(s_{23}x_2 +s_{33}x_3)^2 +\ldots\nonumber\\&+(s_{2n}x_2 +\ldots +s_{nn}x_n)^2+ (s_{2(n+1)}x_2+\ldots +s_ {n(n+1)}x_n)^2\big]\nonumber\\&=\frac{n+1}{n}x_1^2-0+\rho ^2\frac{n}{n-1}(x_2^2+x_3^2+\ldots +x_n^2)=\frac{n+1}{n}x_1^2+\rho ^2\frac{n}{n-1}(1-x_1^2) \nonumber\\&=\frac{n^2-1}{n^2}\cdot\frac{n}{n-1}=\frac{n+1}{n}.
\end{align}
\end{proof}
Notice that EA is not designed for angles greater than $\pi/2$. To solve this problem for an angle $\tfrac{\pi}{2}<\Theta <\pi$ where $\theta =\pi -\Theta$, the equation $v_2=q_2+\cot\theta ~s_1$ in \cite{Rivaz} (eq. (2.1)), where $s_2=v_2/\Vert v_2\Vert$, holds. Because $v_2 =q_2+\cot\Theta ~s_1 =q_2-\cot\theta~s_1$, so the new vector $\hat{v}_2$ is the symmetric of the initial $\hat{v}_2$ with respect to the vector $v$ as the axis of symmetry. Also this new equation can be written as $v_2 =q_2-\vert\cot \Theta\vert s_1$. This method can be applied for every $v_k~ (k\geq 3)$. So it is enough to change the plus sign in the line 5 of EA to the minus as follows
\begin{equation}
v_{k}=q_k-\left(\sqrt{k-1}/\sqrt{\left ((\sec\Theta-1)(\sec\Theta +k-1)\right)}\right)s~. \label{Theta}
\end{equation}
Moreover, \eqref{Theta} determines the bound of $\Theta$ to construct a set of 'linearly independent' equiangular vectors from a set of $k$ linearly independent vectors. Since $\sec\Theta<-1$, then $\sec\Theta<1-k$. It implies 
\begin{equation}
\cos\Theta>-\tfrac{1}{k-1}. \label{Ctheta}
\end{equation}
The inequality \eqref{Ctheta} shows that the infimum of $\Theta$ for which any set of equiangular vectors of size $k$ in $\mathbb{R}^n$ be linearly independent, is $\cos ^{-1} (-\tfrac{1}{k-1})$. Otherwise if the lower bound holds the outcome of EA is the same as Lemma \ref{Lmax}.
\section{Conclusion}
In this paper we introduced an algorithm thereby any set of linearly independent vectors $v_1,\ldots,v_n$ in $\mathbb{R}^n$ is ordered as a set of equiangular vectors with an arbitrary angle between $0$ and $\arccos(\frac{-1}{n-1})$. The outcome vectors of this algorithm produce a matrix whose column vectors are equiangular, is called equiangular matrix. We can say that working with the equiangular	 matrices is straightforward because their column vectors are equiangular with unit norm. These matrices can be appeared in some matrix factorizations.
\section*{References}


\begin{thebibliography}{99}
\bibitem{Ahlfors} L. Ahlfors, {\em Complex Analysis}, McGraw-Hill, New York, 1966.
\bibitem{Bjorck} A. Bj\"orck, {\em Numerical Methods for Least Squares Problems}, SIAM, Philadelphia, PA, 1996.
\bibitem{BjSrck} A. Bj\"orck, {\em Solving linear least squares problems by Gram-Schmidt orthogonalization}, BIT {\bf 7}, 1-21, 1967.
\bibitem{Bodmann} B. Bodmann and V. Paulsen, {\em Frames, graphs and erasures}, Linear Algebra. Appl, {\bf 404}, 118-146, 2005.
\bibitem{Casazza} P. Casazza and J. Kovacevic, {\em Equal-norm tight frames with erasures}, Adv. Comp. Math, {\bf 18}, 387-430, 2003.
\bibitem{Casazza2} P. Casazza and G. Kutyniok, {\em A generalization of Gram-Schmidt orthogonalization generating all Parseval frames}, Adv. Comput. Math,  {\bf 27}, 65-78, 2007.
\bibitem{Duffin} R. J. Duffin and A. C. Schaeffer, {\em A class of nonharmonic Fourier series}, Trans. Amer. Math. Soc, {\bf 72}, 341-366, 1952.
\bibitem{Godsil} C. D. Godsil, and G. Royle, {\em Algebraic graph theory}, Springer-Verlag, New York, 2001.
\bibitem{Golub} G. H. Golub and C. Van Loan, {\em Matrix Computations} (3rd ed.), Johns Hopkins, ISBN 978-0-8018-5414-9, 1996.
\bibitem{Health} R.W. Heath, T. Strohmer and A. J. Paulraj, {\em On quasi-orthogonal signatures for CDMA}, IEEE Trans. Information Theory 52 No, {\bf 3}, 1217-1226, 2006.
\bibitem{Higham1} N. J. Higham. {\em Computing the polar decomposition with applications}, SIAM J. Sci. Stat. Comput. Philadelphia, PA, USA, {\bf 7}:4, 1160–1174, 1986.
\bibitem{Higham} N. J. Higham. {\em Functions of Matrices: Theory and Computation}, Society for Industrial and Applied Mathematics, Philadelphia, PA, USA, 2008.
\bibitem{Holmes} R. B. Holmes and V. I. Paulsen, {\em Optimal frames for erasures}, Linear Algebra. Appl, {\bf 377}, 31-51, 2004.
\bibitem{Horn} R. A. Horn and C. R. Johnson, Matrix Analysis, Cambridge University Press, Cambridge, 1985.
\bibitem{Lemmens} P. Lemmens and J. Seidel, {\em Equiangular lines}, J. Algebra, {\bf 24}, 494-512, 1973.
\bibitem{Meyer} C.D. Meyer, {\em Matrix Analysis and Applied Linear Algebra}. SIAM 2001.
\bibitem{Rivaz} A. Rivaz and D. Sadeghi, {\em An algorithm for constructing Equiangular vectors}, Submitted, 2016.
\bibitem{Schmidt} E. Schmidt, {\em \"Uber die Aufl\"osung linearer Gleichungen mit unendlich vielen Unbekannten}, Rend. Circ. Math. Palermo. Ser. 1, {\bf 25}, 53-77, 1908.
\bibitem{Solivérez} E. Solivérez, E. Gagliano,  {\em Orthonormalization on the plane: a geometric approach}, Mex. J. Phys, {\bf 31}, no. 4, 743-758, 1985.
\bibitem{Strohmer} T. Strohmer and R. W. Heath, {\em Grassmannian frames with applications to coding and communication}, Appl. Comp. Harmonic Anal, 14 No. {\bf 3}, 2003, 257-275.
\bibitem{Terefethen} L. N. Trefethen, and D. Bau, {\em Numerical Linear Algebra III, Philadelphia}, PA: SIAM, 1997.
\bibitem{Todhunter} I. Todhunter, {\em Spherical Trigonometry for the use of college and schools, with numerous examples}, fifth edition, Macmillan and Co., London, 1886; an excellent source on spherical geometry may be found at www.gutenberg.org/ebooks/19770Cached Nov 12, 2006.
\end{thebibliography}
\end{document}